\documentclass{amsart}
\usepackage{amsmath}
\usepackage{amssymb}
\usepackage[all]{xy}
\usepackage{graphicx}
\usepackage{mathrsfs}

\setcounter{tocdepth}{1}

\numberwithin{equation}{section}

 \usepackage{url}

\usepackage[latin1]{inputenc}
\usepackage{xspace,amssymb,amsfonts,euscript}
\usepackage{amsthm,amsmath}
\usepackage{palatino}
\usepackage{euscript}
\input xy \xyoption {all}

\usepackage{tikz,environ}
\usetikzlibrary{patterns,snakes}

\RequirePackage{color}
\definecolor{myred}{rgb}{0.75,0,0}
\definecolor{mygreen}{rgb}{0,0.5,0}
\definecolor{myblue}{rgb}{0,0,0.65}

\RequirePackage{ifpdf}
\ifpdf
 \IfFileExists{pdfsync.sty}{\RequirePackage{pdfsync}}{}
 \RequirePackage[pdftex,
  colorlinks = true,
  urlcolor = myblue, 
  citecolor = mygreen, 
  linkcolor = myred, 
  pagebackref,
  pdfpagemode=None,
  bookmarksopen=true]{hyperref}
\else
 \RequirePackage[hypertex]{hyperref}
\fi

\RequirePackage{ae, aecompl, aeguill} 


    \def\CM{{\mathbb{C}}}

    \def\PM{{\mathbb{P}}}
    \def\QM{{\mathbb{Q}}}

    \def\ZM{{\mathbb{Z}}}




\def\a{\alpha}

\def\g{\gamma}

\def\e{\varepsilon}

\def\la{\lambda}

\def\z{\zeta}

\newcommand{\nc}{\newcommand} \newcommand{\renc}{\renewcommand}

\newcommand{\rdots}{\mathinner{ \mkern1mu\raise1pt\hbox{.}
    \mkern2mu\raise4pt\hbox{.}
    \mkern2mu\raise7pt\vbox{\kern7pt\hbox{.}}\mkern1mu}}

\def\un{\underline}

\def\to{\rightarrow}

\def\laongto{\laongrightarrow}

\nc{\triright}{\stackrel{[1]}{\to}}
\nc{\laongtriright}{\stackrel{[1]}{\laongto}}

\nc{\Hb}{H^\bullet}

\nc{\Br}{\mathcal{B}}
\nc{\HotRR}{{}_R\mathcal{K}_R}
\nc{\HotR}{\mathcal{K}_R}
\nc{\excise}[1]{}
\nc{\defect}{\text{df}}
\nc{\h}[1]{\underline{H}_{#1}}

\nc{\Ga}{\mathbb{G}_a} 
\nc{\Gm}{\mathbb{G}_m} 

\nc{\Perv}{{\mathbf{P}}}

\nc{\IH}{{\mathrm{IH}}}

\nc{\ic}{\mathbf{IC}}

\nc{\gl}{{\mathfrak{gl}}}
\renc{\sl}{{\mathfrak{sl}}}
\renc{\sp}{{\mathfrak{sp}}}

\renc{\Im}{\textrm{Im}}

\nc{\HBM}{H^{BM}}

\DeclareMathOperator{\codim}{{\mathrm{codim}}}



\newtheorem{thm}{Theorem}[section]
\newtheorem{lem}[thm]{Lemma}

\newtheorem{prop}[thm]{Proposition}
\newtheorem{cor}[thm]{Corollary}

\theoremstyle{definition}

\newtheorem{ex}[thm]{Example}

\theoremstyle{remark}
\newtheorem{remark}[thm]{Remark}
\newtheorem{question}[thm]{Question}

\newcommand{\into}{\hookrightarrow}

\def\pt{{\mathrm{pt}}}

\nc{\simto}{\stackrel{\sim}{\to}}

 \title{On torsion in the intersection cohomology of Schubert varieties}

\author{Geordie Williamson}
\email{geordie@mpim-bonn.mpg.de}
\address{Max-Planck-Institut f\"ur Mathematik, Vivatsgasse 7, 53111,
  Bonn, Germany.}

\begin{document}

\begin{abstract} We prove that the prime torsion in the
  local integral intersection cohomology of Schubert varieties in the flag variety of
  the general linear group grows exponentially in the rank. The idea
  of the proof is to find a highly singular point in a Schubert
  variety and calculate the Euler class of the normal bundle to the
  (miraculously smooth) fibre in a particular
   Bott-Samelson resolution. The result is a geometric 
  version of an earlier result established using Soergel bimodule techniques.
\end{abstract}

\maketitle

\begin{center}
  \emph{Dedicated to the memory of Sandy Green.}
\end{center}

\section{Introduction}

Let $X$ be a projective complex algebraic variety equipped with its
metric topology and let $H^*(X, \QM)$ denote its rational
cohomology ring. If $X$ is smooth then there are several remarkable
and useful theorems concerning $H^*(X,\QM)$: Poincar\'e duality, the
hard Lefschetz theorem, the Hodge decomposition, the Hodge-Riemann
relations. If instead one takes integral coefficients
then (derived) Poincar\'e duality still holds.

None of these theorems are valid for singular $X$. Instead one
can consider the intersection cohomology $IH^*(X,\QM)$ of $X$ as defined by
Goresky and MacPherson. If $X$ is smooth then one has a
canonical identification between cohomology and intersection
cohomology. Goresky and
MacPherson proved that Poincar\'e
duality always holds in rational intersection cohomology. It was subsequently
discovered that (analogues of the) the hard
Lefschetz theorem, the Hodge decomposition and
the Hodge-Riemann relations all hold in intersection cohomology
\cite{BBD, Saito}. Thus it is not surprising that intersection cohomology provides a
powerful complement to ordinary cohomology in the study of singular
algebraic varieties.

As with ordinary cohomology, it is also possible to define 
intersection cohomology groups with coefficients in any field or the integers. In their original paper
on intersection (co)homology, Goresky and MacPherson noticed that
(derived) Poincar\'e duality does not hold over the integers
for intersection cohomology \cite[6.3]{GM}. For example, if $X$ is smooth then Poincar\'e
duality implies that the intersection form on (the free part of) its middle cohomology is
unimodular (i.e. non-degenerate over $\ZM$). Goresky and MacPherson
gave an example to show that the analoguous statement need not hold
for integral intersection cohomology. For a given singular $X$ it is
appears to be a difficult question to decide whether its integral
intersection cohomology satisfies Poincar\'e duality
over the integers, or for which primes $p$ it fails.

This question has a local variant. Just as the ordinary cohomology
of a space can be described as the cohomology of the constant sheaf,
intersection cohomology may be obtained as the hypercohomology of the
intersection cohomology complex, a constructible complex of abelian
groups on $X$. Let $\ic(X,\ZM)$ denote the integral intersection
cohomology complex of $X$. The local variant of the above question
which we consider in this paper is the following:

\begin{question} \label{q}
  Descibe the $p$-torsion in the
  stalks or costalks of $\ic(X,\ZM)$. In particular, for which
  primes $p$ are all the stalks and costalks free of $p$-torsion?
\end{question}

Some remarks about this question are in order:
\begin{enumerate}
\item If there is no torsion in the stalks or costalks of $\ic(X,\ZM)$
  then the integral intersection cohomology  satisfies
  Poincar\'e duality. The $p$-local version of this statement also
  holds: absence of $p$-torsion implies that Poincar\'e duality holds
  after inverting all primes $\ne p$. These statements are not if and
  only if in general, however in the case of Schubert varieties (considered
  below) they are.
\item In general the rational intersection
  cohomology complex $\ic(X,\QM)$ is much easier to describe. This
  is due to the decomposition theorem \cite{BBD}, which allows one to
  compute $\ic(X,\QM)$ via resolutions. In particular in many cases
  $\ic(X,\QM)$  can be considered ``known'' and the above question asks
  whether $\ic(X,\ZM)$ contains any surprises.
\item For general $X$ this question appears to be very hard. For
  example, if $X$ has only isolated singularities then the question is
  equivalent to understanding the torsion in the cohomology of the
  links\footnote{The \emph{link} of a point $x \in X$ is defined as
    follows: first we embed an affine neighbourhood of $x$ into
    $\CM^N$ so that $x \mapsto 0$; then the link is defined to be $X
    \cap S^{2N-1}_\e$ for small $\e$, where $S^{2N-1}_\e \subset \CM^{N}$ denotes the sphere of radius
  $\e$ centred at the origin.}  to all singular
  points. In general the link of an isolated singularity can be a rather
  complicated manifold, and describing the torsion in its cohomology
  can be a difficult task.
\end{enumerate}

As well as their intrinsic interest, these questions have applications
in the modular representation theory of finite and algebraic groups. Starting with
the Kazhdan-Lusztig conjectures, intersection cohomology methods have
been very fruitful in Lie theory (see \cite{Licm} for an impressive list of
applications). The power of these methods in characteristic zero
representation theory is usually thanks to the decomposition theorem.
More recently it has been suggested that similar methods could be used to attack questions in
modular representation theory \cite{Soe,MV,JuMSC,JMW1}. However here the decomposition theorem
is missing. The above questions asks for obstructions to transporting
out knowledge in characteristic zero to knowledge in characteristic $p$.

In geometric representation theory a central role is played by Schubert
varieties. The goal of this paper is to provide the following partial
answer to the above question in this case:

\begin{thm} \label{thm:intro}
  The $p$-torsion (for $p$ a prime) in the stalks and costalks of the integral intersection
  cohomology complexes on Schubert varieties in the flag variety of
  the general linear group grows at least exponentially in the rank.
\end{thm}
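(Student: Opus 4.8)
The plan is to reduce the theorem to an explicit computation at one cleverly chosen point of one Schubert variety, and then to exhibit a family of such points (one in each rank) for which the torsion visibly grows. Concretely, the strategy the abstract hints at is: first, locate a point $x$ in a Schubert variety $X_w \subset GL_n/B$ which is ``as singular as possible'' — in practice a point whose stabilizer is large, so that the local structure is governed by a Bott–Samelson resolution $\pi : \Bs(\underline{w}) \to X_w$ in which the fibre $\pi^{-1}(x)$ is smooth (this is the ``miraculously smooth fibre'' of the abstract). Second, use the fact that $\ic(X_w,\ZM)$ can be read off from the pushforward $\pi_* \ZM$ together with a careful analysis of its decomposition over $\ZM$: the failure of the decomposition theorem integrally is exactly what produces torsion, and the torsion is detected by the obstruction to splitting off the summand supported at $x$.

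The key computational step, and the heart of the argument, is to identify the relevant extension class with the \emph{Euler class of the normal bundle} of the fibre $F = \pi^{-1}(x)$ inside $\Bs(\underline{w})$. Since $F$ is smooth, $H^*(F,\ZM)$ is computable (a product of projective spaces, or an iterated projective bundle, depending on $\underline{w}$), and the normal bundle $N_{F/\Bs(\underline{w})}$ is an explicit sum of line bundles pulled back from the various $\PM^1$-factors of the Bott–Samelson tower. The Euler class $e(N_{F/\Bs(\underline{w})}) \in H^{2c}(F,\ZM)$ is then a product of explicit characteristic classes, and the point is to arrange $\underline{w}$ so that this product, when expanded in a monomial basis of $H^{2c}(F,\ZM)$, has coefficients with a large prime factor — or, more robustly, so that the cokernel of the Gysin map $H^*(F,\ZM) \xrightarrow{\cdot\, e} H^{*+2c}(F,\ZM)$ (equivalently the torsion in $H^*$ of the Thom space, equivalently the local intersection cohomology torsion at $x$) contains $\ZM/p\ZM$ for a prime $p$ growing with $n$. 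One then checks, by a direct estimate on this determinant-type quantity (a Vandermonde-like or resultant-type expression in the weights of the line bundles), that the largest prime dividing it — hence the rank of the $p$-torsion summed over $p$ — grows at least exponentially in $n$. Finally, since the stalks of $\ic(X_w,\ZM)$ can only be larger than the piece detected by this one fibre, and since costalks are handled by the analogous computation with Verdier duality (using that for Schubert varieties absence of torsion is equivalent to integral Poincaré duality, as remarked after Question~\ref{q}), the theorem follows.

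The main obstacle I expect is the \emph{reduction} step rather than the estimate: one must verify that for the chosen $\underline{w}$ and $x$ the Bott–Samelson fibre really is smooth, that the local intersection cohomology at $x$ is genuinely governed by this single fibre (i.e. that no other contributions to the decomposition of $\pi_*\ZM$ interfere, which requires controlling the defect of semismallness or at least isolating the stratum through $x$), and that the extension class computing the torsion is \emph{exactly} the Euler class and not merely related to it up to a harmless factor. This is a delicate piece of bookkeeping with parity sheaves / the integral decomposition, and getting the identification on the nose — including the precise degree shifts and the precise line bundles in $N_{F/\Bs(\underline{w})}$ — is where the real work lies. Once that identification is pinned down, choosing the word $\underline{w}$ to make the Euler-class determinant arithmetically rich, and bounding its prime factors from below by an exponential in $n$, is comparatively routine (and, as the abstract notes, matches an already-known bound obtained by Soergel bimodule methods, which gives an independent check on the final answer).
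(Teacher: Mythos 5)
Your overall strategy --- find a highly singular point, pass to a Bott--Samelson resolution whose fibre $F$ over that point is ``miraculously'' smooth, and reduce the failure of the integral decomposition theorem to a single $1\times 1$ intersection form computed as the pushforward of an Euler class --- is exactly the paper's. But there are two genuine gaps. First, the relevant Euler class is \emph{not} that of $N_{F/BS(\un{w})}$, but of the normal bundle of $F$ inside $f^{-1}(N)$, where $N$ is a normal slice to the stratum through $x$ (in the paper, $N = S_{w_A}$, the repelling set). The ranks are very different: $F$ is half-dimensional inside $f^{-1}(N)$ (that is the miracle), so $e(N_{F/f^{-1}(N)})$ lands in top degree and $p_! e$ is an integer; whereas $N_{F/BS(\un{w})}$ has rank $\dim BS - \dim F$, which is much larger, and $p_! e$ would not be a scalar. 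Without first establishing that $f^{-1}(N)$ is smooth of the right dimension (which the paper does via \eqref{eq:smooth}--\eqref{eq:Sdim}) and that $F$ sits half-dimensionally inside it, the intersection-form criterion of \cite{JMW2} cannot be invoked.

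Second, you assert that the normal bundle ``is an explicit sum of line bundles pulled back from the various $\PM^1$-factors of the Bott--Samelson tower,'' and that one can then read off the Euler class by multiplying characteristic classes. The paper does not know this and in fact explicitly remarks that whether the Euler class arises from an induced bundle (Example \ref{ex:ec}) is open. Instead, the computation is done via $T$-equivariant localization at the fixed points of $F$ (parametrized by subexpressions): the paper shows the Euler class is \emph{combinatorial} in the sense of \S\ref{sec:euler-class-lemma}, i.e.\ its localizations have a specific product shape, and then Corollary \ref{cor:ecl} converts $p_!$ into an iterated divided-difference operator $\partial_{w_m}(\e_n^{a_m}\cdots\partial_{w_1}(\e_n^{a_1})\cdots) = C$. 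Your proposed route through line bundles would need to be replaced by this equivariant argument. Finally, a calibration point: you describe the concluding arithmetic estimate as ``comparatively routine,'' but the paper says the opposite --- producing candidate $C$'s with large prime factors is easy, while \emph{proving} exponential growth in $N$ requires genuine analytic number theory, which is outsourced to the Kontorovich--McNamara appendix of \cite{WT}. So the number theory is a real external input, not a routine corollary of the geometry.
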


Again, some remarks are in order:
\begin{enumerate}
\item This is a geometric version of an earlier theorem proved using Soergel
  bimodules, diagrammatics and the nil Hecke ring in
  \cite{XW,WT}. Important contributions to this theorem were made by
  Soergel, Libedinsky, Elias-Khovanov, Elias and He. On the geometric
  side important contributions were made by
Braden (who discovered 2-torsion for $n = 8$, see the appendix to \cite{W}) and Polo (who showed the
existence of $n$-torsion in rank $4n$).
\item Schubert varieties admit affine pavings and so their
ordinary cohomology is free over the integers. The above theorem tells
us that (at least locally) there is lots of torsion in intersection
cohomology. 
\item We are still very far from a complete understanding of Question
  \ref{q} in the setting of Schubert varieties. This is already evident in the phrase ``at least'' in the
  above theorem, whose proof produces many examples of
  torsion, but certainly makes no claim to exhaustiveness.
\item In many ``simple'' examples (Schubert varieties in
  Grassmannians \cite{Zel}, Schubert varieties for $GL(n,\CM)$ for $n \le 7$ \cite{W})
  there is \emph{no torsion at all} in the stalks or costalks of integral
  intersection cohomology sheaves. The above theorem tells us that these
  simple examples are rather deceptive.
\item By results of Soergel one can use the above theorem to deduce
  that any bound for Lusztig's conjecture on the characters of simple
  rational representations of $GL_n$ in characteristic $p$ must grow at least exponentially
  in $n$. Hence the above theorem gives many counterexamples to the
  expected bounds in Lusztig's conjecture \cite{L}.
Similarly, in \cite{WT} it is explained how one can use such
  results to produce counterexamples to a conjecture of James
  \cite{James} on the simple modular representations of the symmetric group.
\item All Schubert varieties in the flag variety of $GL_n$ also occur
  as Schubert varieties in the flag varieties of groups of types
  $B_n$, $C_n$ and $D_n$. Hence the above theorem may be rephrased as
  saying that the torsion in the stalks and costalks of the integral
  intersection cohomology of Schubert varieties in the flag variety of
  any simple complex algebraic group grows at least exponentially in the rank.
\end{enumerate}

As already mentioned, the above theorem can be deduced from previous
work in the context of Soergel bimodules. However I
think it is worthwhile to publish a new proof for two reasons:
\begin{enumerate}
\item The proof relies only on the combinatorics of expressions and
  geometric ideas. In particular it does not use
  the theory of Soergel bimodules, their diagrammatics, or the nil
  Hecke ring (as in \cite{XW,WT}). Hence this paper is potentially
  accessible to a wider audience than \cite{WT}.
\item The proof provides a recipe to find many highly singular points
  in Schubert varieties, whose resolutions are nonetheless
  amenable to explicit analysis. It is possible that these points and
  their resolutions will be useful in other problems in singularity
  theory and the study of Schubert varieties. With such potential
  future applications in mind, and also with the goal of
  understanding existing work more conceptually, an explicit
  description of the geometry of the situation seems worthwhile.
\end{enumerate}

\subsection{Main theorem} We now give a more precise formulation of
the main theorem of this paper. The formulation is 
somewhat technical, and hence we need some more notation.

Let $R = \ZM[\e_1, \e_2, \dots, \e_n]$ be a polynomial ring in $n$
variables graded such that $\deg \e_i =2$ and let
$W = S_n$ the symmetric group on $n$-letters. Then $W$ acts by
permutation of variables on $R$. Let $s_1,
\dots, s_{n-1}$ denote the simple transpositions of $S_n$ and let
$\ell$ denote the  length function. Let $\partial_i$ denote the $i^{th}$ divided
difference operator:
\[
\partial_i(f) = \frac{f - s_i f}{\e_i-\e_{i+1}} \in R.
\]
For any element $w \in S_n$ we have well-defined operators $\partial_w = \partial_{i_1}
\dots \partial_{i_m}$ where $w = s_{i_1} \dots s_{i_m}$ is a reduced
expression for $w$.

Consider elements of the form
\begin{equation} \label{eq:exp}
C = \partial_{w_m} (\e_n^{a_m}  \partial_{w_{m-1}}
(\e_n^{a_{m-1}} 
\dots \partial_{w_1} (\e_n^{a_1}) \dots ))
\end{equation}
where $w_i \in S_n$ are arbitrary. 
We assume that $\sum \ell(w_i) = a$ where $a = \sum a_i$. Because
$\e_i$ has degree 2 and $\partial_{w}$ has degree $-2\ell(w)$ it
follows that $C \in \ZM$ for degree reasons.

Let $N := n + a$, and $G = GL_N(\CM)$. We identify $S_N$ (the
symmetric group on $N$ letters) with the Weyl group of $G$ in the
standard way. Let $B \subset G$ denote the Borel subgroup of
upper-triangular matrices. Given any subset $I \subset \{ 1,
\dots, N-1 \}$ we let $w_I$ denote the longest element of the standard
parabolic subgroup of $S_N$ corresponding to $I$, and by $P_I \supset
B$ denote the parabolic subgroup corresponding to $I$. Let $M := \{ 1,
\dots, n-1\}$.

The main result of this paper is the following:

\begin{thm} \label{thm:main}
  Suppose that $C \ne 0$. Then there exists a Schubert variety
  $X \subset GL_{n+a}/P_M$ and a (Bott-Samelson) resolution
\[
f : \widetilde{X} \to X \subset G/P_M
\]
such that the complex $Rf_* (\ZM/C\ZM_{\widetilde{X}})$ (the
derived pushforward of the constant sheaf on $\widetilde{X}$) is not isomorphic to a direct sum of intersection
cohomology sheaves. More precisely, the decomposition theorem fails at the
point $w_I$, where $I = \{ 1,2, \dots, n-1, n+1, \dots, n+a-1 \}$.
\end{thm}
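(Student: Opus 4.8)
The plan is to translate the combinatorial expression $C$ into geometry by building a Bott--Samelson resolution whose central fibre over the point $w_I$ is smooth, and then detecting the failure of the decomposition theorem through an Euler-class computation. First I would fix the word: the nested divided-difference expression \eqref{eq:exp} is built from the reduced expressions of the $w_i$ together with the multiplications by $\e_n^{a_i}$, and I would read off from it a sequence of simple reflections in $S_N$ (where $N = n+a$) by expanding each $\e_n^{a_i}$ as a product of length-one divided differences acting on a fresh variable $\e_{n+1}, \e_{n+2}, \dots$, i.e.\ using the identity that multiplication by $\e_n$ followed by a suitable $\partial_j$ reproduces the action in a larger polynomial ring. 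This is the standard ``adding boxes'' passage from divided-difference calculus in rank $n$ to honest Schubert calculus in rank $N$, and it should produce an explicit expression $\underline{w}$ in the generators $s_1, \dots, s_{N-1}$. The associated Bott--Samelson variety $\widetilde X = \Bs(\underline{w})$ maps to $G/B$, and post-composing with the projection to $G/P_M$ gives $f : \widetilde X \to X$ with $X = \overline{BwP_M/P_M}$ for the Demazure product $w$ of $\underline{w}$.

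Next I would analyse the fibre $F := f^{-1}(w_I P_M)$ over the distinguished point. The key claim, promised by the abstract, is that $F$ is smooth --- this should follow because the point $w_I P_M$ is a point whose stabilizer conditions force the Bott--Samelson coordinates to lie in a product of affine spaces and flag-variety-type pieces that happen to assemble into a smooth variety; I would verify this by writing $\widetilde X$ in Bott--Samelson coordinates $[g_1 : g_2 : \dots]$ with $g_j \in P_{s_{i_j}}$ and intersecting with the preimage of the $P_M$-coset of $w_I$, checking that the defining equations are transverse. Then $Rf_*(\ZM/C\ZM_{\widetilde X})$ restricted to a neighbourhood of $w_I$ is governed, via proper base change, by the cohomology $H^*(F, \ZM/C)$, and dually by the cohomology of a fibre with supports; the relevant invariant is the Euler class $e(\nu)$ of the normal bundle $\nu$ to $F$ inside $\widetilde X$ (equivalently, of the normal bundle to the fibre in the total space over the Schubert cell), which lives in $H^{2d}(F,\ZM)$ where $d = \codim F$.

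The heart of the argument is then to identify this Euler class, or rather its image in $H^*(F,\ZM/C)$, with the integer $C$ (up to sign and units). I expect that $H^*(F,\ZM)$ is a polynomial-type ring on which the normal bundle splits as a sum of line bundles whose first Chern classes are exactly the linear forms $\e_i - \e_j$ and $\e_n$ appearing in the divided-difference recursion, so that $\prod c_1(\mathcal{L}_k)$, pushed forward along $F \to \pt$, computes precisely the nested application of the $\partial_{w_i}$'s to the monomials $\e_n^{a_i}$ --- i.e.\ $C$. Concretely, the pushforward $\int_F e(\nu) = C$ by the localization/Bott-residue formula, and the condition $\sum \ell(w_i) = a$ is exactly the dimension matching that makes this a number rather than a class of positive degree. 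Since $C \neq 0$ but $C \equiv 0 \pmod{C}$, the Euler class vanishes in $H^{2d}(F, \ZM/C\ZM)$, which forces a nonzero differential in the Leray spectral sequence for $f$ over the stratum through $w_I$ (or, equivalently, makes the relevant stalk--costalk pairing degenerate mod $C$); a direct sum of shifted IC sheaves would have this pairing an isomorphism after inverting the primes dividing $C$, contradiction. Hence $Rf_*(\ZM/C\ZM_{\widetilde X})$ is not a direct sum of IC sheaves, and the failure is localized at $w_I$.

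The main obstacle, and the step requiring genuine care, is the smoothness of the fibre $F$ together with the precise identification of its normal bundle's Chern roots: one must choose the word $\underline{w}$ and the point $w_I$ so that \emph{all} the accumulated singularity of $f$ is concentrated transversally to a smooth $F$, and then match the Bott--Samelson line bundles on $F$ with the linear forms in the divided-difference expression on the nose (signs and orderings included), so that the pushforward integral literally evaluates to $C$. Getting the bookkeeping of which $\e_i$ pairs up with which simple reflection --- essentially reconstructing the recursion \eqref{eq:exp} from the geometry --- is where the ``miracle'' has to be made explicit.
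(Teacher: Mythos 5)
Your high-level plan coincides with the paper's --- build a Bott--Samelson resolution from the divided-difference data, exhibit a smooth fibre $F$ over $w_I$, compute the pushforward of the Euler class of the relevant normal bundle and match it with $C$, and conclude that the intersection form degenerates mod $C$ --- but several of the steps you assert require genuine lemmas that the proposal does not supply, and two are mis-stated. The first gap concerns the word and the fibre. You gesture at ``expanding $\e_n^{a_i}$ as length-one divided differences on fresh variables,'' but the actual expression $\un{x} = \un{w}_m\un{z}_m\cdots\un{w}_1\un{z}_1$ must be designed so that a combinatorial rigidity holds (Lemma~\ref{lem:sub}): \emph{every} subexpression of $\un{x}$ landing in $w_AW_M$ has all occurrences of $s_n$ switched off and all $A$-letters switched on. This rigidity, together with the Bia\l ynicki--Birula decomposition, is what forces $f^{-1}(w_AP_M)$ to equal the closed Bott--Samelson subvariety $BS(\un{w}_m\cdots\un{w}_1)$ and hence to be smooth (Lemma~\ref{lem:fibre}, Corollary~\ref{cor:bsiso}). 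A vague appeal to ``transversality of the defining equations'' is not a substitute; without the rigidity you have no control on the fibre. Relatedly, the normal bundle you name (``to $F$ inside $\widetilde X$'') is the wrong one --- it has rank $\ell(x)-a$ --- whereas the intersection form is $p_!e(N)$ for $N$ the normal bundle of $F$ inside $f^{-1}(S_{w_A})$, the preimage of the normal slice, of rank exactly $a=\dim F$; that this codimension equals $\dim F$ is precisely the dimension count $\sum\ell(w_i)=\sum a_i$ that puts one in the miracle situation.

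The second gap is the identity $p_!(e(N))=C$ and the logic of the conclusion. Invoking ``localization/Bott-residue'' is not enough: one needs the Euler class lemma (Proposition~\ref{prop:push} and Corollary~\ref{cor:ecl}), which shows that for a ``combinatorial'' equivariant class on a Bott--Samelson variety the pushforward to a point is computed by the nested divided-difference operators, plus the explicit identification of $e(N)$ as combinatorial (Lemma~\ref{lem:polys}) and the integrality observation (Lemma~\ref{lem:inv}) allowing one to replace the actual weight products $n_i=\prod_j(\e_n-\e_{n+j})$ by $\e_n^{a_i}$. None of these steps is routine, and the $T$-weight computation on the normal bundle (Lemma~\ref{lem:normal}) requires explicitly knowing the tangent space to $f^{-1}(S_{w_A})$ at the fixed points. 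Finally, your closing argument (``a nonzero Leray differential'' / ``inverting the primes dividing $C$'') is not the correct mechanism and the latter phrase is ill-posed over $\ZM/C\ZM$; the clean criterion (recalled in \S\ref{sec:what} from \cite{JMW2}) is that, under the hypotheses of free fibre homology and simply connected strata, $Rf_*\Lambda_{\widetilde X}$ decomposes iff each intersection form has the same rank over $\Lambda$ as over $\QM$. Here the form is the $1\times1$ matrix $(C)$, of rank $1$ over $\QM$ and rank $0$ over $\ZM/C\ZM$, and this rank drop is exactly the failure.
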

Remarks:
\begin{enumerate}
\item The Schubert variety $X$ and resolution $\widetilde{X}$ are
explicit starting from the expression \eqref{eq:exp}
for $C$. We refer the reader \S~\ref{sec:geometry} for the
description of all spaces involved.
\item The failure of the decomposition theorem in Theorem
  \ref{thm:main} implies the existence of some intersection cohomology
  complex supported on $X$ which has $p$-torsion in its stalk or
  costalk, for some prime $p$ dividing $C$. This fact can be easily
  deduced from the theory of parity sheaves \cite{JMW2}.
\item Given the above theorem it is easy to produce many examples of
  $C$ with large prime factors relative to $N = n + a$ (see \cite[\S 6]{WT}). For example, one
  can find expressions which produce Fibonacci numbers linearly in
  $N$. However to
  \emph{prove} that these factors grow exponentially with respect to
  $N$ requires some rather sophisticated results from number theory. This is
  discussed in detail in the appendix to \cite{WT} by Kontorovich,
  McNamara and the author.
\item A technical point: The allowed expressions for $C$ in
  \cite{WT} are slightly more general than those allowed above; 
  in \cite{WT} one is also allowed expressions involving both $\e_n$
  and $\e_1$, and not only $\e_n$ as above.
  However I have checked that there are no essential gains by
  allowing these more general expressions, and the setting of
  the current paper simplifies proofs.
\end{enumerate}

\subsection{Acknowledgements:} Expressions of the
form \eqref{eq:exp} emerged first in joint work with Xuhua He
\cite{XW}. Subsequently I tried to find a geometric explanation, 
which is the Euler class lemma of \S~\ref{sec:euler-class-lemma}. I
would like to thank him for many useful discussions and
observations. I am also grateful to Tom Braden, Daniel Juteau, Carl Mautner and
Patrick Polo from whom I learnt most of the geometric and topological
techniques of this paper.

It is a great pleasure to dedicate this paper to the memory of Sandy
Green. One of my first memories of representation theory is Gus
Lehrer's empassioned description of 
Green functions and the character table of the finite general
linear group. I spent 2008 - 2011 as a postdoc in Oxford and I
remember Sandy's 
active participation in the representation theory seminar. After one
of my first lectures on parity sheaves he excitedly asked many
questions, and expressed his desire to better understand perverse
sheaves. I was impressed at his openness to new ideas, and have tried to
imitate it since.

\section{Notation}\label{sec:not}

\emph{Varieties and sheaves:}
Throughout all algebraic varieties are over $\CM$ and are equipped
with their classical (metric) topologies. Dimension and codimension
always refer to \emph{complex} dimension. Given a ring $\Lambda$
and a space $X$ we denote by $\Lambda_X$ the constant sheaf on $X$
with values in $\Lambda$.

\emph{Expressions and subexpressions:}
Throughout we view the symmetric group $S_n = W$ as a Coxeter group with
simple reflections $S \subset W$ the simple transpositions. An
expression is a sequences $\un{w} = (s_1, \dots, s_m)$ with $s_i \in
S$. We write expressions as $\un{w} = s_1 \dots s_m$ and dropping the
underline denotes the product $w \in W$. Given a fixed subexpression
$\un{w} = s_1 \dots s_m$ a subexpression is a sequence $\un{e} = e_1
\dots e_m$ with each $e_i \in \{ 0 , 1 \}$. What is traditionally referred to
as a subexpression is the sequence $(s_1^{e_1}, \dots, s_m^{e_m})$,
however we prefer the more economical notation. We write $\un{e}
\subset \un{w}$ to indicate that $\un{e}$ is a subexpression of
$\un{w}$. Given $\un{e} = e_1 \dots e_m \subset \un{w}$ we set $\un{w}^{\un{e}} =
s_1^{e_1} \dots s_m^{e_m}$.

\section{What we need to do} \label{sec:what}

In this section we recall some standard material on the role played by
intersection forms in the decomposition theorem. This section gives
the
algebro-geometric scaffolding of the rest of the paper. One can find
background material for this section in \cite{BBD, dCM,CG,JMW2}.

Fix a (singular) normal and irreducible complex algebraic variety $X$ and a resolution of
singularities\footnote{In this paper \emph{resolution of
    singularities} is used to refer to any proper birational
  morphism of algebraic varieties with smooth source. We do not
  require our map to be an isomorphism over the
  smooth locus of $X$.}
\[
f : \widetilde{X} \to X.
\]
We fix a stratification of $X$ adapted to $f$, i.e. a stratification
\[
X = \bigsqcup X_\la
\]
of $X$ into a finite disjoint union of locally closed, connected and
smooth subvarieties such that the induced map $f : f^{-1}(X_\la) \to
X_\la$ is a topologically locally trivial fibration in (usually
singular) varieties.

By the decomposition theorem of Beilinson, Bernstein, Deligne and
Gabber, $Rf_*\QM_{\widetilde{X}}$ is isomorphic to a direct sum of shifts of
intersection cohomology sheaves on $X$. Let us fix a non-unit $M \in
\ZM$ and consider the ring $\Lambda = \ZM/M\ZM$. We would like to
understand when the decomposition holds for $Rf_*
\Lambda_{\widetilde{X}}$.

For each stratum $X_\la$ and point $x \in X_\la$ we can choose a normal
slice $N$ to the stratum $X_\la$ through $x$. If we set $F :=
f^{-1}(x)$ and $\widetilde{N} := f^{-1}(N)$ we have a commutative
diagram of Cartesian squares:
\[
\xymatrix{ F \ar[r] \ar[d] & \widetilde{N} \ar[r] \ar[d] & \widetilde{X} \ar[d] \\
 \{x \} \ar[r] & N \ar[r] & X }
\]
Set $d := \dim \widetilde{N} = \dim N = \codim (X_\la \subset X)$. The inclusion
$F \into \widetilde{N}$ equips the integral homology of $F$ with an
intersection form (see \cite[\S~3.1]{JMW2})
\begin{equation} \label{eq:if}
IF_\la : H_{d-j}(F ; \ZM) \times H_{d+j}(F; \ZM) \to H_0(\widetilde{N_\la}; \ZM) = \ZM.
\end{equation}

\begin{remark}
  For different points $x, x' \in X_\la$ and normal slices $N, N'$ the pairs $f^{-1}(x) \subset f^{-1}(N)$ and
  $f^{-1}(x') \subset f^{-1}(N')$ are diffeomorphic, though not
  canonically (the isotopy class of diffeomorphism depends on the
  homotopy type of a path from $x$ to $x'$).
\end{remark}

Let us make the following (restrictive) assumptions, which hold
for Schubert varieties and their Bott-Samelson resolutions:
\begin{enumerate}
\item the integral homology $H_*(F; \ZM)$ of all fibres $F$ of $f$ is
  free over $\ZM$;
\item each stratum $X_\la$ is simply connected.
\end{enumerate}

Under these assumptions we have (see \cite[\S~3]{JMW2}):

\begin{thm} The decomposition theorem for $f$
  holds with coefficients in $\Lambda$ if and only if all intersection forms \eqref{eq:if}
  have the same rank over $\QM$ as they do over $\Lambda$.
\end{thm}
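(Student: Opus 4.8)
The plan is to reduce the "if and only if" statement about the decomposition theorem to a statement about the splitting of $Rf_*\Lambda_{\widetilde X}$ stratum by stratum, and then analyze each stratum via the intersection form on fibres.

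The plan is to reduce the statement to a stratum-by-stratum analysis of $\FC_\Lambda := Rf_*\Lambda_{\widetilde{X}}$, comparing it against the rational complex $\FC_\QM := Rf_*\QM_{\widetilde{X}}$, whose structure is already understood via \cite{BBD, dCM}. Two preliminary observations organise everything. First, $\FC_\Lambda$ is self-dual up to shift, since $\widetilde X$ is smooth and $f$ is proper, and this self-duality is compatible with base change along $\ZM \to \QM$ and $\ZM \to \Lambda$. Second, by assumption (2) every simple perverse sheaf supported on a stratum closure $\ov{X_\la}$ is a shift of $\ic(\ov{X_\la},\Lambda)$ with trivial local system, so — via the perverse $t$-structure and the fact that every perverse sheaf is an iterated extension of simple ones — $\FC_\Lambda$ is \emph{always} an iterated extension of shifts of $\ic$'s. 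The decomposition theorem over $\Lambda$ thus amounts exactly to the splitting of all these extensions. On the rational side, \cite{BBD} guarantees the splitting, and \cite{dCM} identifies the multiplicity of $\ic(\ov{X_\la},\QM)$ (in the relevant degree) with $\rank_\QM IF_\la$.

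The link between the splitting and the intersection forms is the content of \cite[\S 3.1]{JMW2}, which I would quote: via the Cartesian diagram above, the natural "costalk-to-stalk" map $i_x^! \FC_\Lambda \to i_x^* \FC_\Lambda$ at a point $x \in X_\la$, computed on a normal slice $N$, is precisely the intersection form $IF_\la$ with $\Lambda$-coefficients of \eqref{eq:if}, and again this is compatible with base change. So the ranks of these forms over $\QM$ and over $\Lambda$ measure, respectively, the rational multiplicities (which are maximal, the extensions being split) and the actual size of the splitting visible at each stratum with $\Lambda$-coefficients.

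For the forward direction, assume the decomposition holds over $\Lambda$, say $\FC_\Lambda \cong \bigoplus_\la \ic(\ov{X_\la},\Lambda)\otimes V_\la^\Lambda$. The key point is that, by assumption (1), the stalks of $\FC_\Lambda$ are the $H^*(F;\Lambda)$, which are free; since a direct summand of a free module is free, every $\ic(\ov{X_\la},\Lambda)$ that occurs has torsion-free stalks, hence its stalk dimensions coincide with those of $\ic(\ov{X_\la},\QM)$. Comparing the two decompositions stalk-by-stalk gives a triangular system in the closure order which then forces $\dim V_\la^\Lambda = \dim V_\la^\QM$ for every $\la$. Combined with the identification of the previous paragraph (which, when $\FC_\Lambda$ is split, exhibits $IF_\la$ over $\Lambda$ as an identity block of size $\dim V_\la^\Lambda$ plus zero, the other summands contributing nothing in the relevant bidegree by the support and cosupport conditions), we get $\rank_\Lambda IF_\la = \dim V_\la^\Lambda = \dim V_\la^\QM = \rank_\QM IF_\la$.

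For the converse, assume $\rank_\Lambda IF_\la = \rank_\QM IF_\la$ for all $\la$, and induct on the strata in the closure order. Suppose $U$ is an open union of strata with $\FC_\Lambda|_U$ already a direct sum of shifted $\ic$'s, and let $X_\la$ be a stratum open in $X \setminus U$, so $U' = U \sqcup X_\la$ is open. Using the recollement for $(U \hookrightarrow U' \hookleftarrow X_\la)$ together with self-duality, the obstruction to splitting the $\ic(\ov{X_\la},\Lambda)$-summands off $\FC_\Lambda|_{U'}$ is the failure of the self-adjoint map $i_x^!\FC_\Lambda|_N \to i_x^*\FC_\Lambda|_N$ — that is, of $IF_\la$ over $\Lambda$ — to attain its "expected" rank; and the expected rank is the $\QM$-rank, precisely because over $\QM$ the splitting does occur. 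The hypothesis supplies exactly this, the complement is again self-dual with strictly smaller support, freeness of $H_*(F;\ZM)$ keeps the dimension bookkeeping consistent, and the induction continues. \textbf{The main obstacle} is this last inductive step: making precise and proving the homological-algebra statement that a self-dual complex whose restriction to $U$ is a sum of shifted $\ic$'s splits further over $U'$ if and only if the boundary intersection form has full rank. This is the heart of \cite[\S 3]{JMW2} and requires careful bookkeeping with the perverse $t$-structure, Verdier duality, and the two freeness/simple-connectivity hypotheses.
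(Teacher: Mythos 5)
The paper does not in fact prove this theorem: it simply cites \cite[\S~3]{JMW2} for it, and you are reconstructing that argument blind. Your overall architecture — self-duality of $Rf_*\Lambda_{\widetilde X}$, iterated-extension structure from the perverse $t$-structure plus simple-connectivity of strata, the identification of the intersection form with the costalk-to-stalk map on a normal slice, and a recollement induction for the converse — is indeed the approach taken in \cite{JMW2}. However, there is a genuine gap in the forward direction as you have written it.

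The sentence ``the stalks of $\FC_\Lambda$ are free; since a direct summand of a free module is free, every $\ic(\overline{X_\la},\Lambda)$ that occurs has torsion-free stalks, hence its stalk dimensions coincide with those of $\ic(\overline{X_\la},\QM)$'' does not hold water. In the cases of interest $\Lambda$ is a field (e.g.\ $\FM_p$), so ``torsion-free'' is vacuous and buys you nothing; and even granting torsion-freeness, it simply does not follow that $\dim_\Lambda\HC^k(\ic(\overline{X_\la},\Lambda))_x = \dim_\QM\HC^k(\ic(\overline{X_\la},\QM))_x$ — those stalk dimensions can and do differ in general (that is the whole phenomenon this paper is about). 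Consequently your ``triangular system in the closure order'' has different coefficient matrices over $\Lambda$ and over $\QM$, and one cannot conclude $\dim V_\la^\Lambda = \dim V_\la^\QM$ by comparing them. The missing ingredient is the \emph{parity} property: because each $\ic(\overline{X_\la},\Lambda)$ that splits off $\FC_\Lambda$ is a direct summand of an even complex, its stalk cohomology at any point is concentrated in degrees of a single parity, so its stalk dimensions are forced to equal the Euler characteristics of the corresponding stalks of $\FC_\Lambda$ restricted along the closure order — and \emph{those} Euler characteristics are coefficient-independent by assumption (1) and universal coefficients. This is precisely how \cite{JMW2} pins down the stalk dimensions and closes the triangular system. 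Without invoking parity (or something equivalent), the forward direction stalls exactly where you assert it succeeds. Your converse direction has the correct skeleton but, as you yourself flag, the inductive recollement step is the real content and would need to be carried out in full; again the parity/Krull--Schmidt machinery of \cite{JMW2} is what makes that step go through.
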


The approach of this paper is to calculate these intersection forms in
some special cases. In general this is a difficult task. We now
consider some situations where the job is easier.

Let $F$ and $\widetilde{N}$ be as above. Suppose first that $\dim F <
\frac{1}{2}d$. In this case $H_{d+j}(F) = 0$ for $j \ge 0$. Hence
all intersection forms are zero and the conditions of the theorem are
vacuous. If one has the inequality
\[
\dim f^{-1}(x) < \frac{1}{2} \codim (X_\la \subset X)
\]
for all strata $X_\la$ and $x \in X_\la$ except for those over which $f$ is an isomorphism then
$f$ is \emph{small}. In this case  $Rf_* \Lambda_X[ \dim X] =
\ic(X, \Lambda)$ for any $\Lambda$, which explains why the
decomposition theorem is easy in this case.

\begin{remark}
  By a theorem of Zelevinsky \cite{Zel}, Schubert varieties in
  Grassmannians always admit small resolutions. In particular, the
  stalks and costalks of their 
  integral intersection cohomology complexes are free of $p$-torsion.
\end{remark}

Now suppose that $\dim F = \frac{1}{2}d$. In other words $F \subset
\widetilde{N}$ is half-dimensional and $d$ is the real dimension of $F$. Hence there is only one
intersection form which can be non-zero, namely
\[
H_d(F; \ZM) \times H_d(F; \ZM) \to \ZM.
\]
In this case we have a canonical isomorphism
\[
H_d(F; \ZM) = \bigoplus \ZM[Z]
\]
where the direct sum is over the fundamental classes $[Z]$ of the
irreducible components $Z \subset F$ of maximal dimension.

If the inequality
\[
\dim f^{-1}(x) \le \frac{1}{2} \codim (X_\la \subset X)
\]
holds for all strata $X_\la$ and $x \in X_\la$ then $f$ is called
\emph{semi-small}.  In this case there is only one intersection form
per stratum. However in this case controlling the intersection
forms can be a difficult task.

\begin{ex}
  A simple and rich source of semi-small maps are provided by the
  minimal  resolutions of Kleinian surface singularities $X$ (i.e. quotients $\CM^2 /
  \Gamma$ where $\Gamma \subset SL_2(\CM)$ is a finite subgroup). Here
  $X$ has a unique singular point $0 \in X$ and the 
  exceptional fibre $f^{-1}(0)$ gives a collection of transversely
  intersecting $\PM^1$'s, whose dual graph determines a simply laced
  Dynkin diagram. The intersection form is given by
  the negative of the corresponding Cartan matrix. Hence the decomposition theorem
  is controlled by the determinant of the Cartan matrix. This example has been studied in
  detail by Juteau \cite{decperv}.
\end{ex}

\begin{remark}
  In the case of semi-small maps these forms are non-degenerate and
  even definite (of sign determined by the codimension of the
  strata). This observation is the starting point for de Cataldo and
  Migliorini's Hodge theoretic proof of the decomposition theorem
  \cite{dCM,dCM2}.
\end{remark}

Now assume that the equality $\dim F = d$ holds and that $F$ is
irreducible. Then $H_d(F;\ZM)$ is free of rank one (with basis given by
the fundamental class $[F]$) and the intersection form is a $1 \times
1$-matrix. If $F$ is in addition smooth then we have a diffeomorphism
of pairs 
\[
(F \subset \widetilde{N}) \simto (F \subset N_{\widetilde{N}/F})
\]
where $N_{\widetilde{N}/F}$ is the normal bundle to $F$ in
$\widetilde{N}$. It follows from standard algebraic topology that in
this case the intersection form is given 
$p_! e(N_{\widetilde{N}/F})$ where $p_! : H^{top}(F) \to \ZM$ denotes
the trace map on cohomology and $e(N_{\widetilde{N}/F})$ denotes the
Euler class of the vector bundle $N_{\widetilde{N}/F}$ on $F$.

We refer to this case ($F$ irreducible and smooth) as the \emph{miracle
  situation} because it gives a situation in which the intersection
forms are manageable but non-trivial. After all it is
not difficult to calculate the determinant of a $1 \times 1$-matrix!

\begin{ex} Suppose that $Y$ is a smooth variety such that $Y \subset
  T^*Y$ may be contracted to a point. (That is, there exists a map $f : T^*Y \to X$ which
  is an isomorphism on $T^*Y \setminus Y$ and maps $Y$ to a
  point $x_0 \in X$.) In this
  case $x_0$ is the only singular point in $X$ and $f$ is
  semi-small. Also, as $f^{-1}(x_0) = Y$ we are in the miracle
  situation. The intersection form is given by the Euler class of
  $T^*Y$ which is the $- \chi(Y)$, where $\chi(Y)$ denotes the Euler
  characteristic of $Y$. By the above discussion, the
  decomposition theorem holds with coefficients in $\Lambda$ if and
  only if  the image of $\chi(Y)$  in $\Lambda$ is invertible.

An example of this situation is when $Y = \PM^n$ in which case $X$ may
be realized as the space of rank one matrices in
$\mathfrak{sl}_n(\CM)$ (a minimal nilpotent orbit), see
\cite[\S 3.2]{JMW1}. In this case the intersection form is
$(-\chi(\PM^n)) = (-(n+1))$.
\end{ex}

\section{Groups and Schubert varieties}\label{sec:schubert}

Throughout we work with $G = GL_N(\CM)$, with $T \subset G$ the
maximal torus of diagonal matrices. We denote by $W = S_N$ the Weyl
group of $G$ with simple reflections $S = \{ s_i \}_{i = 1}^{N-1}$ the simple
transpositions. We will often regard $W$ as the subgroup of $G$
 of permutation matrices.

Let $\e_i$ denote the character of $T$ given by $\e_i
(diag(\la_1, \dots, \la_n)) = \la_i$. We let $B$
(resp. $B^-$) denote the subgroup of upper (resp. lower) triangular
matrices. Let $\Phi = \{ \e_i - \e_j \; | \; i \ne j \}$ denote the
 roots, $\Phi^+ := \{ \e_i - \e_j \; | \; i < j \}$ denote the
 positive roots and $\Phi^- := -\Phi^+$ the negative roots. For $1 \le i \le n$ let $\a_i := \e_i - \e_{i+1}$
denote the simple root.

For any $t = s_i \in S$ we denote by $P_t$ the minimal standard parabolic
subgroup with roots $\Phi^+ \cup \{ - \a_i \}$. For any subset $M
\subset S$ we consider the corresponding standard parabolic subgroups
$W_M = \langle t \; | \; t \in M \rangle \subset W$ and
$P_M = \langle P_t \; | \; t \in M \rangle \subset G$. We denote the
corresponding subroot system by $\Phi_M$ and its positive and negative
roots by $\Phi_M^+$ and $\Phi_M^-$.

For $M \subset S$ we have the partial flag variety $G/P_M$. Keeping in
mind that we identify $W$ with permutation matrices we have a natural
map $W \to G/B$ whose image is $(G/B)^T$, the $T$-fixed points on
$G/B$. Simiarly we have a canonical identification
\[
W/W_M \simto (G/P_M)^T.
\]
We will abuse notation and identify a coset $wW_M \in W/W_M$ with the
corresponding fixed point in $G/P_M$. Given any $xW_M \in W/W_M$ we
have a Schubert cell
\[
X_x := B \cdot xP_M/P_M \subset G/P_M
\]
(an affine space) and its closure
\[
\overline{X}_x \subset G/P_M
\]
a Schubert variety.


Let $\z^\vee : \CM^* \to T$ denote a dominant regular cocharacter
(i.e. such that the induced action of $\CM^*$ on the Lie
algebra of the unipotent radical of $B$ has strictly positive
weights). Then the Bia{\l}ynicki-Birula cells on $G/P_M$ coincide with
the Bruhat cells. In other words, for any for any $T$-fixed point
$xW_M$ in $G/P_M$ we have:
\[
X_x = \{ q \in G/P_M \; | \; \lim_{z \to 0} \z^\vee(z)
\cdot q = x \}.
\]
If instead we consider the dual Bia{\l}ynicki-Birula cells we get a
stratification dual to the Bruhat stratification. We have
\[
S_x := B^- \cdot xP_M/P_M = \{ q \in G/P_M \; | \; \lim_{z \to \infty} \z^\vee(z)
\cdot q = x \}.
\]

\section{Bott-Samelson varieties} \label{sec:bs}

Given a sequence $\un{w}:= t_1 t_2\dots t_m$ with $t_i \in S$ consider
the Bott-Samelson variety
\[
BS(\un{w}) := P_{t_1} \times_B P_{t_2} \times_B \dots \times_B P_{t_m}/B
\]
defined as the quotient of $P_{t_1} \times P_{t_2} \times \dots \times
P_{t_m}$ by $B^m$ acting on the right by
\[
(p_1, p_2 \dots, p_m) \cdot (b_1, b_2, \dots, b_m) = (p_1b_1,
b_1^{-1}p_2b_2, \dots ,b_{m-1}^{-1}p_m b_m).
\]
We denote the image of $(p_1, \dots, p_m)$ in $BS(\un{w})$ by $[p_1,
\dots, p_m]$. The Bott-Samelson variety $BS(\un{w})$ is a (left) $B$-variety via  $b \cdot [p_1, p_2,\dots, p_m] := [bp_1,p_2, \dots, p_m]$.

Given any subexpression $\un{e}$ of
$\un{w}$ we have a point $[\un{e}] := [s_1^{e_1}, \dots, s_m^{e_m}] \in
BS(\un{w})$ which is fixed by $T$. For any $t
\in S$ we let $u_t(\lambda)$ denote the root subgroup corresponding to
$-\a_t$. Then for any subexpression $\un{e}$ we have a chart around
$[\un{e}]$ given by
\begin{equation}
  \label{eq:chart}
\CM^m \ni (\la_1, \dots, \la_m) \mapsto [t_1^{e_1}u_{t_1}(\la_1),
t_2^{e_2}u_{t_2}(\la_2), \dots, t_m^{e_m}u_{t_m}(\la_m)] \in
BS(\un{w}).
\end{equation}
We denote this chart by $C_{\underline{e}} \subset
BS(\un{w})$. The charts $C_{\underline{e}}$ cover $BS(\un{w})$ as we run over all subexpressions
$\un{e}$. Moreover, using the relation $\g u_t(\lambda) \g^{-1} =
u_t(\a_t(\g)^{-1}\lambda)$ one checks easily that the
$T$-action on $C_{\underline{e}}$ is linear, with weights
\begin{equation}\label{eq:Twts}
(t_1^{e_1}(-\alpha_{t_1}), (t_1^{e_1}t_2^{e_2})(-\alpha_{t_2}), \dots,
(t_1^{e_1} \dots t_m^{e_m})(-\alpha_{t_m}) ).
\end{equation}
In particular the set $\{ [\un{e}] \; | \; \un{e}\text{ a subexpression 
  of } \un{w}\}$ coincides with the set of $T$-fixed points on $BS(\un{w})$.

For any subsequence $\un{e}$ of $\un{w}$ we have a closed subvariety
\[
BS(\un{e}) := \{ [p_1, \dots, p_m] \; | \; p_i = 1\text{ if }e_i = 0
\} \subset BS(\un{w}).
\]
For example $BS(11\dots1) = BS(\un{w})$ and $BS(00\dots 0) = \pt$.
It is easy to see that $BS(\un{e})$ is isomorphic to the Bott-Samelson
variety $BS(\un{z})$ where $\un{z}$ is the
expression obtained from $s_1^{e_1} \dots s_m^{e_m}$ by deleting all
occurrences of the identity.

We denote by $C_{\un{e}}^+ \subset BS(\un{w})$ the Bia{\l}ynicki-Birula
cell corresponding to the $T$-fixed poing $[\un{e}]$ and cocharacter
$\z^\vee$. That is
\[
C_{\un{e}}^+ := \{ x \in BS(\un{w}) \; | \; \lim_{z \to 0} \z^\vee(z)
\cdot x = [\un{e}] \}.
\]
Because $C_{\un{e}} \subset BS(\un{w})$ is open and $T$-stable we have
\[
C_{\un{e}}^+ \subset C_{\un{e}}.
\]
Moreover, from the above calculation of $T$-weights it follows that
\begin{equation} \label{eq:bsbb}
C_{\un{e}}^+ = \{ (\la_i) \in C_{\un{e}} \; | \; \la_i = 0 \text{ if }
(t_1^{e_1}\dots t_i^{e_i})(-\alpha_{t_i}) \in \Phi^- \}.
\end{equation}
(We use the above identification of $C_{\un{e}}$ with $\CM^m$.)

For any $M \subset S$, the multiplication map induces a proper morphism of varieties
\[
f : BS(\un{w}) \to G/P_M.
\]
If $w$ is minimal in its coset $wW_M$ and if $\un{w}$ is a reduced
expression for $w$ then $f$ is an isomorphism over the Schubert cell
$X_w \subset G/P_M$. In particular, in this case $f$ gives a resolution of
singularities of the Schubert variety $\overline{X}_w$. The following
easy lemma will be useful later:

\begin{lem} \label{lem:fixedpoints}
  The $T$-fixed points in the fibre $f^{-1}(xW_M)$ are given by
\[
\{ [\un{e}] \; | \; \text{subexpressions $\un{e}$ of $\un{w}$ with }(\un{w}^{\un{e}})W_M = xW_M \}.
\]
\end{lem}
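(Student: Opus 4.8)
The plan is to unwind the definition of the map $f$ and of the $T$-fixed points on $BS(\un{w})$, and then check directly which fixed points land in the fibre over $xW_M$. First I would recall that the multiplication map $f : BS(\un{w}) \to G/P_M$ is given by $[p_1, \dots, p_m] \mapsto p_1 p_2 \cdots p_m P_M/P_M$; this is manifestly $T$-equivariant, so it carries $T$-fixed points to $T$-fixed points. We already noted in the excerpt that the set of $T$-fixed points of $BS(\un{w})$ is exactly $\{ [\un{e}] \mid \un{e} \text{ a subexpression of } \un{w} \}$, and that the $T$-fixed points of $G/P_M$ are identified with $W/W_M$. So the $T$-fixed points in $f^{-1}(xW_M)$ are precisely those $[\un{e}]$ with $f([\un{e}]) = xW_M$ in $(G/P_M)^T = W/W_M$.

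Next I would compute $f([\un{e}])$ for a subexpression $\un{e} = e_1 \cdots e_m$. By definition $[\un{e}] = [s_1^{e_1}, \dots, s_m^{e_m}]$ (viewing each $s_i \in S$ as the corresponding permutation matrix in $W \subset G$), so
\[
f([\un{e}]) = s_1^{e_1} s_2^{e_2} \cdots s_m^{e_m} P_M / P_M = (\un{w}^{\un{e}}) P_M/P_M,
\]
using the notation $\un{w}^{\un{e}} = s_1^{e_1} \cdots s_m^{e_m}$ from Section~\ref{sec:not}. Under the identification $(G/P_M)^T = W/W_M$ this fixed point corresponds to the coset $(\un{w}^{\un{e}}) W_M$. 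Therefore $f([\un{e}]) = xW_M$ if and only if $(\un{w}^{\un{e}}) W_M = x W_M$, which is exactly the condition in the statement.

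The only real point requiring care is the identification of $f([\un{e}])$ with the coset $(\un{w}^{\un{e}})W_M$: one must be sure that the ``permutation matrix'' representative $s_1^{e_1} \cdots s_m^{e_m} \in G$ maps, under the canonical map $W \to (G/B)^T$ and then $(G/B)^T \onto (G/P_M)^T = W/W_M$, to the coset of the Weyl group product $\un{w}^{\un{e}} = s_1^{e_1} \cdots s_m^{e_m} \in W$. This is immediate once one recalls that the chosen lifts are group homomorphic on the level of products of simple reflections up to elements of $B$ (indeed, the product of permutation matrices literally is the permutation matrix of the product in $S_N$), so no correction term appears. With that in hand the lemma is just a chain of identifications, and I do not expect any genuine obstacle; the statement is really a bookkeeping consequence of $T$-equivariance of $f$ together with the two computations of $T$-fixed-point sets already carried out in the excerpt.
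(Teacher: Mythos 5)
Your proof is correct, and since the paper states Lemma \ref{lem:fixedpoints} without proof (labelling it an ``easy lemma''), your argument simply fills in the straightforward verification the paper leaves to the reader: $T$-equivariance of $f$ plus the identifications of $BS(\un{w})^T$ with subexpressions and of $(G/P_M)^T$ with $W/W_M$, together with the observation that $f([\un{e}]) = (\un{w}^{\un{e}})W_M$ because products of permutation-matrix lifts are themselves permutation-matrix lifts. This is exactly the bookkeeping one would expect, and there is no gap.
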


\section{Combinatorics of reduced expressions}
\label{sec:comb}

In this section we define the reduced expression which determines both the
Schubert variety and the Bott-Samelson resolution occurring in Theorem
\ref{thm:main}. We also establish two combinatorial lemmas involving
this subexpression. Their statements are essentially copied
from \cite{WT}.

\begin{remark}
  The reduced expression combinatorics involved in \cite{WT} is
  slightly more general than that considered here. We hope that this
  makes the treatment below easier to follow.
\end{remark}

Recall the ring $R = \ZM[\e_1, \dots, \e_n]$, the divided difference
operators $\partial_i : R \to R$ and their composites $\partial_w : R
\to R$ for $w \in S_n$ from the introduction. Fix an expression of the form:
\[
C = \partial_{w_m}(\e_n^{a_m}
\dots \partial_{w_2}(\e_n^{a_2}\partial_{w_1}(\e_n^{a_1})) \dots).
\]
The following assumption will be in place for the rest of this paper:
\begin{equation}
    \label{eq:non-zero}
  0 \ne C \in \ZM.
\end{equation}
We set $a := \sum_{i = 1}^m a_i$. Because $\e_i$ has degree 2 and $\partial_w$ has
degree $-2\ell(w)$, \eqref{eq:non-zero} is equivalent to the assumptions:
\begin{gather}
  \label{eq:equal}
    a = \sum_{i = 1}^m \ell(w_i), \\
C \ne 0.
\end{gather}

Because $[\partial_j, \e_n ] = 0$ for $j \ne
n-1$ we may and do assume that $w_i$ is minimal in $S_n / \langle s_1, \dots,
s_{n-2} \rangle$ for all $i$. It follows that each $w_i$ has a unique
reduced expression. It has the form
\[
\un{w}_i = s_{k_i} s_{k_i+1} \dots s_{n-1}
\]
for some $1 \le k_i \le n-1$.

We work in $S_N$ where $N = n + a$. Consider the subsets $M = \{ s_1,
\dots, s_{n-1}\}$ and $A = \{ s_{n+1}, \dots, s_{n+a-1}\}$.
That is, we divide the nodes of our Coxeter diagram as follows:
\[
\begin{tikzpicture}
\foreach \x in {0,1,3,4,5,7} \node at (\x,0) {$\bullet$};
\foreach \x/\la in {0/1,1/2,3/n-1,4/n,5/n+1,7/n+a-1} \node at (\x,0.5) {$s_{\la}$};
\foreach \x in {2,6} \node at (\x,0) {$\dots$};
\draw (0,0) -- (1.5,0);
\draw (2.5,0) -- (5.5,0);
\draw (6.5,0) -- (7,0);
\def\y{-0.15}
\draw [ decoration={brace,mirror,raise=0.1cm},decorate] (0,\y) to (3,\y);
\draw [ decoration={brace,mirror,raise=0.1cm},decorate] (5,\y) to
(7,\y);
\node at (1.5,4*\y) {$M$};
\node at (6,4*\y) {$A$};
\end{tikzpicture}
\]
Let $W_M$ and $W_A$ denote the corresponding parabolic subgroups. The
simple reflection $s_n$ plays a special role, as will become clear shortly.

Consider
\[
\un{x} = \un{w}_{m} \un{z}_m \dots \un{w}_2 \un{z}_2 \un{w}_1 \un{z}_1
\]
where
\begin{align*}
  \un{z}_1 &= (s_ns_{n+1} \dots s_{n+a_1 - 1}) \dots (s_n s_{n+1}) (s_n),\\
  \un{z}_2 &= (s_ns_{n+1} \dots s_{n+a_1 + a_2 - 1}) \dots (s_ns_{n+1} \dots
  s_{n+a_1+1}) (s_ns_{n+1} \dots s_{n+a_1}),\\
& \qquad \vdots\\
\un{z}_m &= (s_ns_{n+1} \dots s_{b-1}) \dots (s_ns_{n+1} \dots s_{n+a-a_m+1})
(s_ns_{n+1}\dots s_{n+a-a_m}).
\end{align*}
We denote by $\un{z}'_1,\un{z}'_2,\dots,\un{z}'_m$ the similar reduced
expressions with all occurrences of $s_n$ deleted:
\begin{align*}
  \un{z}'_1 &= (s_{n+1} \dots s_{n+a_1 - 1}) \dots (s_{n+1}),\\
& \qquad \vdots\\
\un{z}'_m &= (s_{n+1} \dots s_{a-1}) \dots (s_{n+1} \dots s_{n+a-a_m+1})
(s_{n+1}\dots s_{n+a-a_m}).
\end{align*}

\begin{remark}
 The expression $\un{z}_m  \dots \un{z}_2 \un{z}_1$ is a reduced expression for $w_{\{s_n\} \cup A}$. Similarly, $\un{z}'_m  \dots \un{z}'_2 \un{z}'_1$ is a reduced expression for $w_A$.
\end{remark}

\begin{ex} \label{ex:x} We give an example of the expression $\un{x}$. Let $n = 4$
 and consider $\un{w}_i$ defined as follows:\[
\un{w}_1 = s_2s_3, \quad \un{w}_2 = s_3,\quad \un{w}_3 = s_2s_3,\quad \un{w}_4 =
s_1s_2s_3 = \un{w}_5.
\]
Take $a_1 =3$, $a_2 = 2$, $a_3 = 2$, $a_4 = 2$, $a_5 =
2$, so that $a = 3 + 2 + 2 + 2 + 2 = 11$ and $N = 4 + 11 = 15$.
Then we may depict $\un{x}$ via the following string diagram:
\begin{equation*}
\begin{array}{c}
\begin{tikzpicture}[xscale=0.3,yscale=0.17]
\node at (2.5,13) {\tiny $M$};
\node at (10,13) {\tiny $A$};
\draw [ decoration={brace,raise=0.1cm},decorate] (1,10) to (4,10);
\draw [ decoration={brace,raise=0.1cm},decorate] (5,10) to (15,10);
\draw (1,10.0000) -- (1,9.20000);
\draw (2,10.0000) -- (2,9.20000);
\draw (3,10.0000) -- (3,9.20000);
\draw (4,10.0000) -- (4,9.20000);
\draw (5,10.0000) -- (5,9.20000);
\draw (6,10.0000) -- (6,9.20000);
\draw (7,10.0000) -- (7,9.20000);
\draw (8,10.0000) -- (8,9.20000);
\draw (9,10.0000) -- (9,9.20000);
\draw (10,10.0000) -- (10,9.20000);
\draw (11,10.0000) -- (11,9.20000);
\draw (12,10.0000) -- (12,9.20000);
\draw (13,10.0000) -- (13,9.20000);
\draw (14,10.0000) -- (14,9.20000);
\draw (15,10.0000) -- (15,9.20000);
\draw (1,9.20000) -- (1,8.20000);
\draw (2,9.20000) -- (2,8.20000);
\draw (3,9.20000) -- (3,8.20000);
\draw (6,9.20000) -- (6,8.20000);
\draw (7,9.20000) -- (7,8.20000);
\draw (8,9.20000) -- (8,8.20000);
\draw (9,9.20000) -- (9,8.20000);
\draw (10,9.20000) -- (10,8.20000);
\draw (11,9.20000) -- (11,8.20000);
\draw (12,9.20000) -- (12,8.20000);
\draw (13,9.20000) -- (13,8.20000);
\draw (14,9.20000) -- (14,8.20000);
\draw (15,9.20000) -- (15,8.20000);
\draw (4,9.20000) to (5,8.20000);
\draw (5,9.20000) to (4,8.20000);
\draw (1,8.20000) -- (1,6.70000);
\draw (2,8.20000) -- (2,6.70000);
\draw (3,8.20000) -- (3,6.70000);
\draw (7,8.20000) -- (7,6.70000);
\draw (8,8.20000) -- (8,6.70000);
\draw (9,8.20000) -- (9,6.70000);
\draw (10,8.20000) -- (10,6.70000);
\draw (11,8.20000) -- (11,6.70000);
\draw (12,8.20000) -- (12,6.70000);
\draw (13,8.20000) -- (13,6.70000);
\draw (14,8.20000) -- (14,6.70000);
\draw (15,8.20000) -- (15,6.70000);
\draw (4,8.20000) to (5,6.70000);
\draw (5,8.20000) to (6,6.70000);
\draw (6,8.20000) to (4,6.70000);
\draw (1,6.70000) -- (1,4.70000);
\draw (2,6.70000) -- (2,4.70000);
\draw (3,6.70000) -- (3,4.70000);
\draw (8,6.70000) -- (8,4.70000);
\draw (9,6.70000) -- (9,4.70000);
\draw (10,6.70000) -- (10,4.70000);
\draw (11,6.70000) -- (11,4.70000);
\draw (12,6.70000) -- (12,4.70000);
\draw (13,6.70000) -- (13,4.70000);
\draw (14,6.70000) -- (14,4.70000);
\draw (15,6.70000) -- (15,4.70000);
\draw (4,6.70000) to (5,4.70000);
\draw (5,6.70000) to (6,4.70000);
\draw (6,6.70000) to (7,4.70000);
\draw (7,6.70000) to (4,4.70000);
\draw (1,4.70000) -- (1,3.90000);
\draw (2,4.70000) -- (2,3.90000);
\draw (3,4.70000) -- (3,3.90000);
\draw (4,4.70000) -- (4,3.90000);
\draw (5,4.70000) -- (5,3.90000);
\draw (6,4.70000) -- (6,3.90000);
\draw (7,4.70000) -- (7,3.90000);
\draw (8,4.70000) -- (8,3.90000);
\draw (9,4.70000) -- (9,3.90000);
\draw (10,4.70000) -- (10,3.90000);
\draw (11,4.70000) -- (11,3.90000);
\draw (12,4.70000) -- (12,3.90000);
\draw (13,4.70000) -- (13,3.90000);
\draw (14,4.70000) -- (14,3.90000);
\draw (15,4.70000) -- (15,3.90000);
\draw[color=gray] (3.5000000,9.60001) rectangle (15.500000,4.30000);
\node at (18, 6.9500000) {$\un{z}_1$};
\draw (1,3.90000) -- (1,3.10000);
\draw (2,3.90000) -- (2,3.10000);
\draw (3,3.90000) -- (3,3.10000);
\draw (4,3.90000) -- (4,3.10000);
\draw (5,3.90000) -- (5,3.10000);
\draw (6,3.90000) -- (6,3.10000);
\draw (7,3.90000) -- (7,3.10000);
\draw (8,3.90000) -- (8,3.10000);
\draw (9,3.90000) -- (9,3.10000);
\draw (10,3.90000) -- (10,3.10000);
\draw (11,3.90000) -- (11,3.10000);
\draw (12,3.90000) -- (12,3.10000);
\draw (13,3.90000) -- (13,3.10000);
\draw (14,3.90000) -- (14,3.10000);
\draw (15,3.90000) -- (15,3.10000);
\draw (1,3.10000) -- (1,1.60000);
\draw (5,3.10000) -- (5,1.60000);
\draw (6,3.10000) -- (6,1.60000);
\draw (7,3.10000) -- (7,1.60000);
\draw (8,3.10000) -- (8,1.60000);
\draw (9,3.10000) -- (9,1.60000);
\draw (10,3.10000) -- (10,1.60000);
\draw (11,3.10000) -- (11,1.60000);
\draw (12,3.10000) -- (12,1.60000);
\draw (13,3.10000) -- (13,1.60000);
\draw (14,3.10000) -- (14,1.60000);
\draw (15,3.10000) -- (15,1.60000);
\draw (2,3.10000) to (3,1.60000);
\draw (3,3.10000) to (4,1.60000);
\draw (4,3.10000) to (2,1.60000);
\draw (1,1.60000) -- (1,0.800000);
\draw (2,1.60000) -- (2,0.800000);
\draw (3,1.60000) -- (3,0.800000);
\draw (4,1.60000) -- (4,0.800000);
\draw (5,1.60000) -- (5,0.800000);
\draw (6,1.60000) -- (6,0.800000);
\draw (7,1.60000) -- (7,0.800000);
\draw (8,1.60000) -- (8,0.800000);
\draw (9,1.60000) -- (9,0.800000);
\draw (10,1.60000) -- (10,0.800000);
\draw (11,1.60000) -- (11,0.800000);
\draw (12,1.60000) -- (12,0.800000);
\draw (13,1.60000) -- (13,0.800000);
\draw (14,1.60000) -- (14,0.800000);
\draw (15,1.60000) -- (15,0.800000);
\draw[color=gray] (0.50000000,3.50000) rectangle (4.5000000,1.20000);
\node at (18, 2.3500000) {$\un{w}_1$};
\draw (1,0.800000) -- (1,0);
\draw (2,0.800000) -- (2,0);
\draw (3,0.800000) -- (3,0);
\draw (4,0.800000) -- (4,0);
\draw (5,0.800000) -- (5,0);
\draw (6,0.800000) -- (6,0);
\draw (7,0.800000) -- (7,0);
\draw (8,0.800000) -- (8,0);
\draw (9,0.800000) -- (9,0);
\draw (10,0.800000) -- (10,0);
\draw (11,0.800000) -- (11,0);
\draw (12,0.800000) -- (12,0);
\draw (13,0.800000) -- (13,0);
\draw (14,0.800000) -- (14,0);
\draw (15,0.800000) -- (15,0);
\draw (1,0) -- (1,-2.30000);
\draw (2,0) -- (2,-2.30000);
\draw (3,0) -- (3,-2.30000);
\draw (9,0) -- (9,-2.30000);
\draw (10,0) -- (10,-2.30000);
\draw (11,0) -- (11,-2.30000);
\draw (12,0) -- (12,-2.30000);
\draw (13,0) -- (13,-2.30000);
\draw (14,0) -- (14,-2.30000);
\draw (15,0) -- (15,-2.30000);
\draw (4,0) to (5,-2.30000);
\draw (5,0) to (6,-2.30000);
\draw (6,0) to (7,-2.30000);
\draw (7,0) to (8,-2.30000);
\draw (8,0) to (4,-2.30000);
\draw (1,-2.30000) -- (1,-4.89999);
\draw (2,-2.30000) -- (2,-4.89999);
\draw (3,-2.30000) -- (3,-4.89999);
\draw (10,-2.30000) -- (10,-4.89999);
\draw (11,-2.30000) -- (11,-4.89999);
\draw (12,-2.30000) -- (12,-4.89999);
\draw (13,-2.30000) -- (13,-4.89999);
\draw (14,-2.30000) -- (14,-4.89999);
\draw (15,-2.30000) -- (15,-4.89999);
\draw (4,-2.30000) to (5,-4.89999);
\draw (5,-2.30000) to (6,-4.89999);
\draw (6,-2.30000) to (7,-4.89999);
\draw (7,-2.30000) to (8,-4.89999);
\draw (8,-2.30000) to (9,-4.89999);
\draw (9,-2.30000) to (4,-4.89999);
\draw (1,-4.90000) -- (1,-5.70000);
\draw (2,-4.90000) -- (2,-5.70000);
\draw (3,-4.90000) -- (3,-5.70000);
\draw (4,-4.90000) -- (4,-5.70000);
\draw (5,-4.90000) -- (5,-5.70000);
\draw (6,-4.90000) -- (6,-5.70000);
\draw (7,-4.90000) -- (7,-5.70000);
\draw (8,-4.90000) -- (8,-5.70000);
\draw (9,-4.90000) -- (9,-5.70000);
\draw (10,-4.90000) -- (10,-5.70000);
\draw (11,-4.90000) -- (11,-5.70000);
\draw (12,-4.90000) -- (12,-5.70000);
\draw (13,-4.90000) -- (13,-5.70000);
\draw (14,-4.90000) -- (14,-5.70000);
\draw (15,-4.90000) -- (15,-5.70000);
\draw[color=gray] (3.5000000,0.400000) rectangle (15.500000,-5.30000);
\node at (18, -2.4500000) {$\un{z}_2$};
\draw (1,-5.70000) -- (1,-6.50000);
\draw (2,-5.70000) -- (2,-6.50000);
\draw (3,-5.70000) -- (3,-6.50000);
\draw (4,-5.70000) -- (4,-6.50000);
\draw (5,-5.70000) -- (5,-6.50000);
\draw (6,-5.70000) -- (6,-6.50000);
\draw (7,-5.70000) -- (7,-6.50000);
\draw (8,-5.70000) -- (8,-6.50000);
\draw (9,-5.70000) -- (9,-6.50000);
\draw (10,-5.70000) -- (10,-6.50000);
\draw (11,-5.70000) -- (11,-6.50000);
\draw (12,-5.70000) -- (12,-6.50000);
\draw (13,-5.70000) -- (13,-6.50000);
\draw (14,-5.70000) -- (14,-6.50000);
\draw (15,-5.70000) -- (15,-6.50000);
\draw (1,-6.50000) -- (1,-7.50000);
\draw (2,-6.50000) -- (2,-7.50000);
\draw (5,-6.50000) -- (5,-7.50000);
\draw (6,-6.50000) -- (6,-7.50000);
\draw (7,-6.50000) -- (7,-7.50000);
\draw (8,-6.50000) -- (8,-7.50000);
\draw (9,-6.50000) -- (9,-7.50000);
\draw (10,-6.50000) -- (10,-7.50000);
\draw (11,-6.50000) -- (11,-7.50000);
\draw (12,-6.50000) -- (12,-7.50000);
\draw (13,-6.50000) -- (13,-7.50000);
\draw (14,-6.50000) -- (14,-7.50000);
\draw (15,-6.50000) -- (15,-7.50000);
\draw (3,-6.50000) to (4,-7.50000);
\draw (4,-6.50000) to (3,-7.50000);
\draw (1,-7.50000) -- (1,-8.30000);
\draw (2,-7.50000) -- (2,-8.30000);
\draw (3,-7.50000) -- (3,-8.30000);
\draw (4,-7.50000) -- (4,-8.30000);
\draw (5,-7.50000) -- (5,-8.30000);
\draw (6,-7.50000) -- (6,-8.30000);
\draw (7,-7.50000) -- (7,-8.30000);
\draw (8,-7.50000) -- (8,-8.30000);
\draw (9,-7.50000) -- (9,-8.30000);
\draw (10,-7.50000) -- (10,-8.30000);
\draw (11,-7.50000) -- (11,-8.30000);
\draw (12,-7.50000) -- (12,-8.30000);
\draw (13,-7.50000) -- (13,-8.30000);
\draw (14,-7.50000) -- (14,-8.30000);
\draw (15,-7.50000) -- (15,-8.30000);
\draw[color=gray] (0.50000000,-6.10000) rectangle (4.5000000,-7.90000);
\node at (18, -7.0000000) {$\un{w}_2$};
\draw (1,-8.30000) -- (1,-9.10001);
\draw (2,-8.30000) -- (2,-9.10001);
\draw (3,-8.30000) -- (3,-9.10001);
\draw (4,-8.30000) -- (4,-9.10001);
\draw (5,-8.30000) -- (5,-9.10001);
\draw (6,-8.30000) -- (6,-9.10001);
\draw (7,-8.30000) -- (7,-9.10001);
\draw (8,-8.30000) -- (8,-9.10001);
\draw (9,-8.30000) -- (9,-9.10001);
\draw (10,-8.30000) -- (10,-9.10001);
\draw (11,-8.30000) -- (11,-9.10001);
\draw (12,-8.30000) -- (12,-9.10001);
\draw (13,-8.30000) -- (13,-9.10001);
\draw (14,-8.30000) -- (14,-9.10001);
\draw (15,-8.30000) -- (15,-9.10001);
\draw (1,-9.10001) -- (1,-11.9000);
\draw (2,-9.10001) -- (2,-11.9000);
\draw (3,-9.10001) -- (3,-11.9000);
\draw (11,-9.10001) -- (11,-11.9000);
\draw (12,-9.10001) -- (12,-11.9000);
\draw (13,-9.10001) -- (13,-11.9000);
\draw (14,-9.10001) -- (14,-11.9000);
\draw (15,-9.10001) -- (15,-11.9000);
\draw (4,-9.10001) to (5,-11.9000);
\draw (5,-9.10001) to (6,-11.9000);
\draw (6,-9.10001) to (7,-11.9000);
\draw (7,-9.10001) to (8,-11.9000);
\draw (8,-9.10001) to (9,-11.9000);
\draw (9,-9.10001) to (10,-11.9000);
\draw (10,-9.10001) to (4,-11.9000);
\draw (1,-11.9000) -- (1,-14.7000);
\draw (2,-11.9000) -- (2,-14.7000);
\draw (3,-11.9000) -- (3,-14.7000);
\draw (12,-11.9000) -- (12,-14.7000);
\draw (13,-11.9000) -- (13,-14.7000);
\draw (14,-11.9000) -- (14,-14.7000);
\draw (15,-11.9000) -- (15,-14.7000);
\draw (4,-11.9000) to (5,-14.7000);
\draw (5,-11.9000) to (6,-14.7000);
\draw (6,-11.9000) to (7,-14.7000);
\draw (7,-11.9000) to (8,-14.7000);
\draw (8,-11.9000) to (9,-14.7000);
\draw (9,-11.9000) to (10,-14.7000);
\draw (10,-11.9000) to (11,-14.7000);
\draw (11,-11.9000) to (4,-14.7000);
\draw (1,-14.7000) -- (1,-15.5000);
\draw (2,-14.7000) -- (2,-15.5000);
\draw (3,-14.7000) -- (3,-15.5000);
\draw (4,-14.7000) -- (4,-15.5000);
\draw (5,-14.7000) -- (5,-15.5000);
\draw (6,-14.7000) -- (6,-15.5000);
\draw (7,-14.7000) -- (7,-15.5000);
\draw (8,-14.7000) -- (8,-15.5000);
\draw (9,-14.7000) -- (9,-15.5000);
\draw (10,-14.7000) -- (10,-15.5000);
\draw (11,-14.7000) -- (11,-15.5000);
\draw (12,-14.7000) -- (12,-15.5000);
\draw (13,-14.7000) -- (13,-15.5000);
\draw (14,-14.7000) -- (14,-15.5000);
\draw (15,-14.7000) -- (15,-15.5000);
\draw[color=gray] (3.5000000,-8.70000) rectangle (15.500000,-15.1000);
\node at (18, -11.900000) {$\un{z}_3$};
\draw (1,-15.5000) -- (1,-16.3000);
\draw (2,-15.5000) -- (2,-16.3000);
\draw (3,-15.5000) -- (3,-16.3000);
\draw (4,-15.5000) -- (4,-16.3000);
\draw (5,-15.5000) -- (5,-16.3000);
\draw (6,-15.5000) -- (6,-16.3000);
\draw (7,-15.5000) -- (7,-16.3000);
\draw (8,-15.5000) -- (8,-16.3000);
\draw (9,-15.5000) -- (9,-16.3000);
\draw (10,-15.5000) -- (10,-16.3000);
\draw (11,-15.5000) -- (11,-16.3000);
\draw (12,-15.5000) -- (12,-16.3000);
\draw (13,-15.5000) -- (13,-16.3000);
\draw (14,-15.5000) -- (14,-16.3000);
\draw (15,-15.5000) -- (15,-16.3000);
\draw (1,-16.3000) -- (1,-17.8000);
\draw (5,-16.3000) -- (5,-17.8000);
\draw (6,-16.3000) -- (6,-17.8000);
\draw (7,-16.3000) -- (7,-17.8000);
\draw (8,-16.3000) -- (8,-17.8000);
\draw (9,-16.3000) -- (9,-17.8000);
\draw (10,-16.3000) -- (10,-17.8000);
\draw (11,-16.3000) -- (11,-17.8000);
\draw (12,-16.3000) -- (12,-17.8000);
\draw (13,-16.3000) -- (13,-17.8000);
\draw (14,-16.3000) -- (14,-17.8000);
\draw (15,-16.3000) -- (15,-17.8000);
\draw (2,-16.3000) to (3,-17.8000);
\draw (3,-16.3000) to (4,-17.8000);
\draw (4,-16.3000) to (2,-17.8000);
\draw (1,-17.8000) -- (1,-18.6000);
\draw (2,-17.8000) -- (2,-18.6000);
\draw (3,-17.8000) -- (3,-18.6000);
\draw (4,-17.8000) -- (4,-18.6000);
\draw (5,-17.8000) -- (5,-18.6000);
\draw (6,-17.8000) -- (6,-18.6000);
\draw (7,-17.8000) -- (7,-18.6000);
\draw (8,-17.8000) -- (8,-18.6000);
\draw (9,-17.8000) -- (9,-18.6000);
\draw (10,-17.8000) -- (10,-18.6000);
\draw (11,-17.8000) -- (11,-18.6000);
\draw (12,-17.8000) -- (12,-18.6000);
\draw (13,-17.8000) -- (13,-18.6000);
\draw (14,-17.8000) -- (14,-18.6000);
\draw (15,-17.8000) -- (15,-18.6000);
\draw[color=gray] (0.50000000,-15.9000) rectangle (4.5000000,-18.2000);
\node at (18, -17.050000) {$\un{w}_3$};
\draw (1,-18.6000) -- (1,-19.4000);
\draw (2,-18.6000) -- (2,-19.4000);
\draw (3,-18.6000) -- (3,-19.4000);
\draw (4,-18.6000) -- (4,-19.4000);
\draw (5,-18.6000) -- (5,-19.4000);
\draw (6,-18.6000) -- (6,-19.4000);
\draw (7,-18.6000) -- (7,-19.4000);
\draw (8,-18.6000) -- (8,-19.4000);
\draw (9,-18.6000) -- (9,-19.4000);
\draw (10,-18.6000) -- (10,-19.4000);
\draw (11,-18.6000) -- (11,-19.4000);
\draw (12,-18.6000) -- (12,-19.4000);
\draw (13,-18.6000) -- (13,-19.4000);
\draw (14,-18.6000) -- (14,-19.4000);
\draw (15,-18.6000) -- (15,-19.4000);
\draw (1,-19.4000) -- (1,-22.2000);
\draw (2,-19.4000) -- (2,-22.2000);
\draw (3,-19.4000) -- (3,-22.2000);
\draw (13,-19.4000) -- (13,-22.2000);
\draw (14,-19.4000) -- (14,-22.2000);
\draw (15,-19.4000) -- (15,-22.2000);
\draw (4,-19.4000) to (5,-22.2000);
\draw (5,-19.4000) to (6,-22.2000);
\draw (6,-19.4000) to (7,-22.2000);
\draw (7,-19.4000) to (8,-22.2000);
\draw (8,-19.4000) to (9,-22.2000);
\draw (9,-19.4000) to (10,-22.2000);
\draw (10,-19.4000) to (11,-22.2000);
\draw (11,-19.4000) to (12,-22.2000);
\draw (12,-19.4000) to (4,-22.2000);
\draw (1,-22.2000) -- (1,-25.0000);
\draw (2,-22.2000) -- (2,-25.0000);
\draw (3,-22.2000) -- (3,-25.0000);
\draw (14,-22.2000) -- (14,-25.0000);
\draw (15,-22.2000) -- (15,-25.0000);
\draw (4,-22.2000) to (5,-25.0000);
\draw (5,-22.2000) to (6,-25.0000);
\draw (6,-22.2000) to (7,-25.0000);
\draw (7,-22.2000) to (8,-25.0000);
\draw (8,-22.2000) to (9,-25.0000);
\draw (9,-22.2000) to (10,-25.0000);
\draw (10,-22.2000) to (11,-25.0000);
\draw (11,-22.2000) to (12,-25.0000);
\draw (12,-22.2000) to (13,-25.0000);
\draw (13,-22.2000) to (4,-25.0000);
\draw (1,-25.0000) -- (1,-25.8000);
\draw (2,-25.0000) -- (2,-25.8000);
\draw (3,-25.0000) -- (3,-25.8000);
\draw (4,-25.0000) -- (4,-25.8000);
\draw (5,-25.0000) -- (5,-25.8000);
\draw (6,-25.0000) -- (6,-25.8000);
\draw (7,-25.0000) -- (7,-25.8000);
\draw (8,-25.0000) -- (8,-25.8000);
\draw (9,-25.0000) -- (9,-25.8000);
\draw (10,-25.0000) -- (10,-25.8000);
\draw (11,-25.0000) -- (11,-25.8000);
\draw (12,-25.0000) -- (12,-25.8000);
\draw (13,-25.0000) -- (13,-25.8000);
\draw (14,-25.0000) -- (14,-25.8000);
\draw (15,-25.0000) -- (15,-25.8000);
\draw[color=gray] (3.5000000,-19.0000) rectangle (15.500000,-25.4000);
\node at (18, -22.200000) {$\un{z}_4$};
\draw (1,-25.8000) -- (1,-26.6000);
\draw (2,-25.8000) -- (2,-26.6000);
\draw (3,-25.8000) -- (3,-26.6000);
\draw (4,-25.8000) -- (4,-26.6000);
\draw (5,-25.8000) -- (5,-26.6000);
\draw (6,-25.8000) -- (6,-26.6000);
\draw (7,-25.8000) -- (7,-26.6000);
\draw (8,-25.8000) -- (8,-26.6000);
\draw (9,-25.8000) -- (9,-26.6000);
\draw (10,-25.8000) -- (10,-26.6000);
\draw (11,-25.8000) -- (11,-26.6000);
\draw (12,-25.8000) -- (12,-26.6000);
\draw (13,-25.8000) -- (13,-26.6000);
\draw (14,-25.8000) -- (14,-26.6000);
\draw (15,-25.8000) -- (15,-26.6000);
\draw (5,-26.6000) -- (5,-28.6000);
\draw (6,-26.6000) -- (6,-28.6000);
\draw (7,-26.6000) -- (7,-28.6000);
\draw (8,-26.6000) -- (8,-28.6000);
\draw (9,-26.6000) -- (9,-28.6000);
\draw (10,-26.6000) -- (10,-28.6000);
\draw (11,-26.6000) -- (11,-28.6000);
\draw (12,-26.6000) -- (12,-28.6000);
\draw (13,-26.6000) -- (13,-28.6000);
\draw (14,-26.6000) -- (14,-28.6000);
\draw (15,-26.6000) -- (15,-28.6000);
\draw (1,-26.6000) to (2,-28.6000);
\draw (2,-26.6000) to (3,-28.6000);
\draw (3,-26.6000) to (4,-28.6000);
\draw (4,-26.6000) to (1,-28.6000);
\draw (1,-28.6000) -- (1,-29.4000);
\draw (2,-28.6000) -- (2,-29.4000);
\draw (3,-28.6000) -- (3,-29.4000);
\draw (4,-28.6000) -- (4,-29.4000);
\draw (5,-28.6000) -- (5,-29.4000);
\draw (6,-28.6000) -- (6,-29.4000);
\draw (7,-28.6000) -- (7,-29.4000);
\draw (8,-28.6000) -- (8,-29.4000);
\draw (9,-28.6000) -- (9,-29.4000);
\draw (10,-28.6000) -- (10,-29.4000);
\draw (11,-28.6000) -- (11,-29.4000);
\draw (12,-28.6000) -- (12,-29.4000);
\draw (13,-28.6000) -- (13,-29.4000);
\draw (14,-28.6000) -- (14,-29.4000);
\draw (15,-28.6000) -- (15,-29.4000);
\draw[color=gray] (0.50000000,-26.2000) rectangle (4.5000000,-29.0000);
\node at (18, -27.599999) {$\un{w}_4$};
\draw (1,-29.4000) -- (1,-30.2000);
\draw (2,-29.4000) -- (2,-30.2000);
\draw (3,-29.4000) -- (3,-30.2000);
\draw (4,-29.4000) -- (4,-30.2000);
\draw (5,-29.4000) -- (5,-30.2000);
\draw (6,-29.4000) -- (6,-30.2000);
\draw (7,-29.4000) -- (7,-30.2000);
\draw (8,-29.4000) -- (8,-30.2000);
\draw (9,-29.4000) -- (9,-30.2000);
\draw (10,-29.4000) -- (10,-30.2000);
\draw (11,-29.4000) -- (11,-30.2000);
\draw (12,-29.4000) -- (12,-30.2000);
\draw (13,-29.4000) -- (13,-30.2000);
\draw (14,-29.4000) -- (14,-30.2000);
\draw (15,-29.4000) -- (15,-30.2000);
\draw (1,-30.2000) -- (1,-33.0000);
\draw (2,-30.2000) -- (2,-33.0000);
\draw (3,-30.2000) -- (3,-33.0000);
\draw (15,-30.2000) -- (15,-33.0000);
\draw (4,-30.2000) to (5,-33.0000);
\draw (5,-30.2000) to (6,-33.0000);
\draw (6,-30.2000) to (7,-33.0000);
\draw (7,-30.2000) to (8,-33.0000);
\draw (8,-30.2000) to (9,-33.0000);
\draw (9,-30.2000) to (10,-33.0000);
\draw (10,-30.2000) to (11,-33.0000);
\draw (11,-30.2000) to (12,-33.0000);
\draw (12,-30.2000) to (13,-33.0000);
\draw (13,-30.2000) to (14,-33.0000);
\draw (14,-30.2000) to (4,-33.0000);
\draw (1,-33.0000) -- (1,-35.8000);
\draw (2,-33.0000) -- (2,-35.8000);
\draw (3,-33.0000) -- (3,-35.8000);
\draw (4,-33.0000) to (5,-35.8000);
\draw (5,-33.0000) to (6,-35.8000);
\draw (6,-33.0000) to (7,-35.8000);
\draw (7,-33.0000) to (8,-35.8000);
\draw (8,-33.0000) to (9,-35.8000);
\draw (9,-33.0000) to (10,-35.8000);
\draw (10,-33.0000) to (11,-35.8000);
\draw (11,-33.0000) to (12,-35.8000);
\draw (12,-33.0000) to (13,-35.8000);
\draw (13,-33.0000) to (14,-35.8000);
\draw (14,-33.0000) to (15,-35.8000);
\draw (15,-33.0000) to (4,-35.8000);
\draw (1,-35.8000) -- (1,-36.6000);
\draw (2,-35.8000) -- (2,-36.6000);
\draw (3,-35.8000) -- (3,-36.6000);
\draw (4,-35.8000) -- (4,-36.6000);
\draw (5,-35.8000) -- (5,-36.6000);
\draw (6,-35.8000) -- (6,-36.6000);
\draw (7,-35.8000) -- (7,-36.6000);
\draw (8,-35.8000) -- (8,-36.6000);
\draw (9,-35.8000) -- (9,-36.6000);
\draw (10,-35.8000) -- (10,-36.6000);
\draw (11,-35.8000) -- (11,-36.6000);
\draw (12,-35.8000) -- (12,-36.6000);
\draw (13,-35.8000) -- (13,-36.6000);
\draw (14,-35.8000) -- (14,-36.6000);
\draw (15,-35.8000) -- (15,-36.6000);
\draw[color=gray] (3.5000000,-29.8000) rectangle (15.500000,-36.2000);
\node at (18, -33.000000) {$\un{z}_5$};
\draw (1,-36.6000) -- (1,-37.4000);
\draw (2,-36.6000) -- (2,-37.4000);
\draw (3,-36.6000) -- (3,-37.4000);
\draw (4,-36.6000) -- (4,-37.4000);
\draw (5,-36.6000) -- (5,-37.4000);
\draw (6,-36.6000) -- (6,-37.4000);
\draw (7,-36.6000) -- (7,-37.4000);
\draw (8,-36.6000) -- (8,-37.4000);
\draw (9,-36.6000) -- (9,-37.4000);
\draw (10,-36.6000) -- (10,-37.4000);
\draw (11,-36.6000) -- (11,-37.4000);
\draw (12,-36.6000) -- (12,-37.4000);
\draw (13,-36.6000) -- (13,-37.4000);
\draw (14,-36.6000) -- (14,-37.4000);
\draw (15,-36.6000) -- (15,-37.4000);
\draw (5,-37.4000) -- (5,-39.4000);
\draw (6,-37.4000) -- (6,-39.4000);
\draw (7,-37.4000) -- (7,-39.4000);
\draw (8,-37.4000) -- (8,-39.4000);
\draw (9,-37.4000) -- (9,-39.4000);
\draw (10,-37.4000) -- (10,-39.4000);
\draw (11,-37.4000) -- (11,-39.4000);
\draw (12,-37.4000) -- (12,-39.4000);
\draw (13,-37.4000) -- (13,-39.4000);
\draw (14,-37.4000) -- (14,-39.4000);
\draw (15,-37.4000) -- (15,-39.4000);
\draw (1,-37.4000) to (2,-39.4000);
\draw (2,-37.4000) to (3,-39.4000);
\draw (3,-37.4000) to (4,-39.4000);
\draw (4,-37.4000) to (1,-39.4000);
\draw (1,-39.4000) -- (1,-40.2000);
\draw (2,-39.4000) -- (2,-40.2000);
\draw (3,-39.4000) -- (3,-40.2000);
\draw (4,-39.4000) -- (4,-40.2000);
\draw (5,-39.4000) -- (5,-40.2000);
\draw (6,-39.4000) -- (6,-40.2000);
\draw (7,-39.4000) -- (7,-40.2000);
\draw (8,-39.4000) -- (8,-40.2000);
\draw (9,-39.4000) -- (9,-40.2000);
\draw (10,-39.4000) -- (10,-40.2000);
\draw (11,-39.4000) -- (11,-40.2000);
\draw (12,-39.4000) -- (12,-40.2000);
\draw (13,-39.4000) -- (13,-40.2000);
\draw (14,-39.4000) -- (14,-40.2000);
\draw (15,-39.4000) -- (15,-40.2000);
\draw[color=gray] (0.50000000,-37.0000) rectangle (4.5000000,-39.8000);
\node at (18, -38.400000) {$\un{w}_5$};
\end{tikzpicture}
\end{array}
\end{equation*}
\end{ex}

Let $x$ denote the element of $W$ expressed by $\un{x}$. The following
lemma follows by careful consideration of a string diagram depicting
$\un{x}$ (see Example \ref{ex:x} above):

\begin{lem} \label{lem:xprop}
  \begin{enumerate}
  \item $\un{x}$ is a reduced expression for $x$;
  \item $x$ is minimal in its coset $xW_M$.
  \end{enumerate}
\end{lem}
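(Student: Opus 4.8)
Both statements reduce to an explicit understanding of the permutation $x$, obtained by a direct analysis of the wiring (string) diagram of $\un{x}$ — this is exactly the ``careful consideration'' the statement refers to. I would rely on two standard facts. First, a word $s_{i_1}\dots s_{i_L}$ is a reduced expression for the permutation it represents if and only if no two strands of its wiring diagram cross twice, equivalently if and only if $L$ equals the number of inversions of that permutation. Second, since $W_M=S_{\{1,\dots,n\}}$, an element $x$ is minimal in $xW_M$ if and only if $\ell(xs_i)>\ell(x)$ for $1\le i\le n-1$, i.e.\ if and only if $x(1)<x(2)<\dots<x(n)$. Thus it suffices to (i) read off the one-line notation of $x$, (ii) check its number of inversions equals the length of $\un{x}$, which is $\sum_i\ell(w_i)+\sum_j|\un{z}_j|=a+\binom{a+1}{2}$ by \eqref{eq:equal} and the Remark that $\un{z}_m\cdots\un{z}_1$ is reduced for $w_{\{s_n\}\cup A}$ (the longest element of $S_{\{n,\dots,N\}}$), and (iii) observe that its first $n$ entries are increasing.

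I would compute $x$ by peeling the blocks off $\un{x}=\un{w}_m\un{z}_m\cdots\un{w}_1\un{z}_1$ from the right, maintaining an inductive description of the permutation $x^{(i)}$ of the partial word $\un{w}_i\un{z}_i\cdots\un{w}_1\un{z}_1$, a word in $S_{N_i}$ with $N_i:=n+a_1+\dots+a_i$. The base case $i=0$ is trivial. The inductive step rests on two observations. First, the computation behind the Remark shows that $z_i=w_{[n,N_i]}w_{[n,N_{i-1}]}$, which fixes $\{1,\dots,n-1\}$ and on $\{n,\dots,N_{i-1}\}$ acts by an order-preserving shift; hence $\un{z}_i$ slides the ``fresh'' strands entering at positions $N_{i-1}+1,\dots,N_i$ leftwards through territory that $\un{x}^{(i-1)}$ has not disturbed, so no strand pair is recrossed — in inversion-set language, $\mathrm{Inv}(z_i)\cap\mathrm{Inv}((x^{(i-1)})^{-1})=\emptyset$, so prepending a reduced word for $\un{z}_i$ keeps the word reduced. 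Second, each $w_i$, being minimal in $S_n/\langle s_1,\dots,s_{n-2}\rangle$, has the staircase form $s_{k_i}\cdots s_{n-1}$, so $\mathrm{Inv}(w_i)=\{\e_p-\e_n:k_i\le p\le n-1\}$ involves only the single strand occupying position $n$; after $\un{z}_i$ that strand is itself fresh, so again no recrossing occurs and prepending $\un{w}_i$ preserves reducedness. Running this through, one finds e.g.\ in the case $m=1$ that $x$ has one-line notation
\[
[\,1,2,\dots,k_1-1,\;k_1+1,\dots,n,\;N,\;N-1,N-2,\dots,n+1,\;k_1\,],
\]
which is visibly increasing on its first $n$ entries and has exactly $\binom{a}{2}+2a=a+\binom{a+1}{2}$ inversions; for general $m$ the description is analogous but more elaborate, the successive staircases interleaving the reversed block $N,N-1,\dots,n$ with the values $k_i$ pushed to the right, and the first $n$ entries of $x$ remaining increasing throughout. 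Given such a description, part (1) follows by matching inversions against letters, and part (2) follows from the first $n$ entries being increasing.

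The main obstacle is the bookkeeping in this inductive diagram chase: one must isolate an invariant for $x^{(i)}$ strong enough to carry the step ``prepend $\un{z}_i$, then prepend $\un{w}_i$'' — in particular one needs to know not just the first $n$ values of $x^{(i-1)}$ but enough about the positions of the values $k_i,\dots,n-1$ and $n$ to verify the two disjointness-of-inversion-sets claims above, and to confirm that prepending $\un{w}_i$ does not disturb the increasing-ness on $\{1,\dots,n\}$. Once the right invariant is identified (equivalently, once the picture is read off correctly) the verifications are routine; identifying it is essentially the whole content of the lemma, which is why it can honestly be said to ``follow from careful consideration of the string diagram.''
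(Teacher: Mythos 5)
Your approach is exactly the ``careful consideration of a string diagram'' the paper intends (the paper gives no further proof), and the pieces you assemble are all correct. But you underestimate your own argument: the inductive invariant you worry about isolating is already in your hands and is weaker than you fear. For reducedness you need only that $x^{(i-1)}$ \emph{fixes $\{N_{i-1}+1,\dots,N\}$ pointwise}, which is automatic since $\un{w}_{i-1}\un{z}_{i-1}\cdots\un{w}_1\un{z}_1$ uses only $s_1,\dots,s_{N_{i-1}-1}$. With that: (a) $\mathrm{Inv}(z_i)\subset\{(p,q):n\le p,\ N_{i-1}<q\le N_i\}$ from the explicit description of $z_i=w_{[n,N_i]}w_{[n,N_{i-1}]}$, while $\mathrm{Inv}((x^{(i-1)})^{-1})\subset\{(p,q):q\le N_{i-1}\}$, so these are disjoint on $q$ alone; (b) $\mathrm{Inv}(w_i)=\{(p,n):k_i\le p\le n-1\}$, and $(p,n)\in\mathrm{Inv}((z_ix^{(i-1)})^{-1})$ would require $(z_ix^{(i-1)})^{-1}(p)>(z_ix^{(i-1)})^{-1}(n)$; but $z_i^{-1}(n)=N_i$ is fixed by $(x^{(i-1)})^{-1}$, giving $(z_ix^{(i-1)})^{-1}(n)=N_i$, which exceeds $(z_ix^{(i-1)})^{-1}(p)=(x^{(i-1)})^{-1}(p)\le N_{i-1}$ for $p<n$. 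No knowledge of where $k_i,\dots,n-1$ sit inside $x^{(i-1)}$ is needed. For part (2) the invariant is simply that $x^{(i)}$ is increasing on $\{1,\dots,n\}$: since $z_i$ is order-preserving on $\{1,\dots,N_{i-1}\}$ (identity below $n$, shift by $a_i$ on $[n,N_{i-1}]$) with image avoiding $\{n,\dots,n+a_i-1\}$, and $w_i$ is order-preserving on $\{1,\dots,n-1\}\cup\{n+a_i,\dots,N_i\}$ (its only disorderly value is $w_i(n)=k_i$, and $n$ is not in that set), the composite $w_iz_i$ preserves the increasing order of the $n$-tuple $(x^{(i-1)}(1),\dots,x^{(i-1)}(n))$. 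So your plan closes with no further bookkeeping than you have already written down.
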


Write $\un{x} = t_1 \dots t_\ell$.

\begin{lem} \label{lem:sub}
Any subsequence $\un{e}$ of $\un{x}$ with $\un{x}^{\un{e}} \in w_AW_M$
has $\e_i = 1$ if $t_i \in A$ and $\e_i = 0$ if $t_i = s_n$.
\end{lem}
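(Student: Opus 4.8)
The plan is to recast the coset condition as a combinatorial statement about a wiring diagram, reduce it to a single sharp claim, and prove that claim by induction on $a$. First the translation: since $W_M=S_{\{1,\dots,n\}}$ fixes $\{n+1,\dots,N\}$ pointwise, the assignment $wW_M\mapsto(w(n+1),\dots,w(N))$ identifies $W/W_M$ with the ordered $a$-tuples of distinct elements of $\{1,\dots,N\}$, and $w_AW_M$ corresponds to the decreasing tuple $(N,N-1,\dots,n+1)$. Thus $\un{x}^{\un{e}}\in w_AW_M$ is equivalent to $\un{x}^{\un{e}}(n+j)=N+1-j$ for all $j$, i.e. $\un{x}^{\un{e}}$ stabilises $\{n+1,\dots,N\}$ and acts there as the longest element $w_A$ of $S_{\{n+1,\dots,N\}}$. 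I would also record at the outset that, $W_M$ and $W_A$ being orthogonal parabolics, $w_AW_M=W_Mw_A=W_Mw_AW_M$ is a single double coset; in particular the hypothesis is unchanged by left multiplication by $W_M$, which disposes of the leftmost block $\un{w}_m$.

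Next I would read $\un{x}=t_1\cdots t_\ell$ as a wiring diagram on $N$ strands: the strand entering in column $p$ exits in column $\un{x}^{\un{e}}(p)$, and a letter $t_i=s_j$ with $e_i=1$ crosses the two strands currently in columns $j,j+1$. Split the strands into \emph{left} (entering in columns $1,\dots,n$) and \emph{right} (columns $n+1,\dots,N$). The blocks $\un{w}_i$ involve only $s_1,\dots,s_{n-1}$, so they never touch a column $>n$; the letters $s_{n+1},\dots,s_{N-1}$ permute columns inside $\{n+1,\dots,N\}$; and $s_n$ is the only letter able to move a strand across the wall between columns $n$ and $n+1$. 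The heart of the proof is the claim that \emph{every occurrence of $s_n$ has $e_i=0$}. Granting it: no strand crosses the wall, so the right strands are moved only by the switched-on $A$-letters. The $A$-letters of $\un{x}$, in word order, spell $\un{z}'_m\un{z}'_{m-1}\cdots\un{z}'_1$, which is a reduced word for $w_A$ (the Remark above); and since every $M$-letter commutes with every $A$-letter one may gather the switched-on $M$-letters to the left, writing $\un{x}^{\un{e}}=v\,w$ with $v\in W_M$ and $w=(\un{z}'_m\cdots\un{z}'_1)^{\un{e}_A}$, where $\un{e}_A$ is the restriction of $\un{e}$ to the $A$-letters. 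As $v$ fixes $\{n+1,\dots,N\}$ pointwise, the hypothesis forces $w=w_A$; but a reduced word has a unique subexpression equal to its product (a subexpression with fewer ones would express $w_A$ by too short a word), so $\un{e}_A$ is the all-ones subexpression and every $A$-letter has $e_i=1$.

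It remains to prove the claim, and this is where the work lies. I would argue by induction on $a$. The starting point is that $s_{N-1}$ occurs exactly once in $\un{x}$, in the longest run of $\un{z}_m$, because $\un{z}_i$ reaches $s_{N-1}$ only when $a_1+\dots+a_i=a$, forcing $i=m$ (we may assume all $a_i\ge1$). The right strand that must land in column $n+1$ is the one entering in column $N$; since only $s_{N-1}$ ever touches column $N$ and it fires at most once, this strand must leave column $N$ exactly once, after which column $N$ is frozen, so the strand occupying column $N-1$ at that instant — necessarily the one entering in column $n+1$ — remains in column $N$ to the end. Excising column $N$, the letter $s_{N-1}$ and one run then reduces the statement to the case of $N-1$, provided that along the way no $s_n$ has fired. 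That proviso is the real obstacle: one must show that a strand knocked across the wall by a switched-on $s_n$ — which always sits at the start of some run — can never be brought back to its own side, using both the nested-run shape of the blocks $\un{z}_i$ (visible in Example~\ref{ex:x}) and the fact that between consecutive $\un{z}$-blocks there is only a $\un{w}_i$-block, which can drag the strand in column $n$ further left but cannot push anything from columns $<n$ rightward past column $n$ within that block. Organising this bookkeeping — and confirming that the interleaved $\un{w}_i$-blocks impose no constraint on $\un{e}$, consistent with the $m=1$ case and with the double-coset remark of the first paragraph — is essentially the entire content of the argument.
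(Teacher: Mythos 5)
Your proposal matches the paper's approach: both reduce the lemma to the single combinatorial claim that no $s_n$-letter can fire, and then deduce that all $A$-letters fire from the observation that $\un{z}'_m\cdots\un{z}'_1$ is a reduced word for $w_A$ (so only the all-ones subexpression can produce $w_A$). The paper's own proof is likewise just a sketch that defers the $s_n$-claim to Lemma 5.6 of [WT] (``as is seen by considering the string diagram''), so your more explicit induction-on-$a$ outline, even with the bookkeeping left open, is at essentially the same level of completeness and uses the same ideas; your logical ordering (prove the $s_n$-claim first, derive the $A$-claim from it) is in fact the cleaner way to read the paper's two-step gesture.
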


\begin{proof}This is (a special case of) Lemma 5.6 in \cite{WT}.
We give an idea of the proof:  We have already remarked that $\un{z}'_m  \dots \un{z}'_2 \un{z}'_1$ is
  a reduced expression for $\un{w}_A$. In particular, to achieve
  $\un{x}^{\un{e}} \in w_AW_M$ we must have $\e_i = 1$ for every $i$ with $t_i \in
  A$. Now, if $\e_i = 1$ for some $i$ with $t_i = s_n$ then it is
  impossible for $\un{x}^{\un{e}}$ to belong to $W_{M \cup A}$, as
  is seen by considering the string diagram of $\un{x}$. The
  result follows.
\end{proof}

\section{Geometry} \label{sec:geometry}

\def\BigBS{\textrm{BigBS}}

We keep the notation from the previous section. Let
$G = GL_{N}(\CM)$ and $P_M$ denote the standard parabolic subgroup
corresponding to $M = \{ s_1 \dots, s_{n-1}\}$.

Consider the Bott-Samelson variety associated to the expression
$\un{x} = t_1 \dots t_\ell$ defined in the previous section:
\[
\textrm{BigBS} := BS(\un{x}) = P_{t_1} \times_B P_{t_2} \times_B \dots \times_B P_{t_\ell} \times_B P_{M}/P_M.
\]
It follows from Lemma \ref{lem:xprop} that  multiplication  induces a resolution of singularities
\[
f : \BigBS \to X_x 
\]
where $X_x$ denotes the Schubert variety $X_x :=
\overline{BxP_M/P_M}\subset G/P_M$.

Consider the closed subvariety
\[
F := \{ [g_1, \dots, g_\ell] \in \BigBS \; | \; g_i = 1 \text{ if
    } t_i=s_n, g_i = t_i \text{ if }t_i\in A \} \subset \BigBS.
\]
(More precisely, we consider the image of the corresponding subset in
$P_{t_1} \times P_{t_2} \times \dots \times P_{t_\ell}$ in $\BigBS$.) 

\begin{lem} \label{lem:fibre}
  $F = f^{-1}(w_AP_M/P_M)$.
\end{lem}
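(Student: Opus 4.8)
\emph{Overall strategy.} The plan is to show that $F$ and $f^{-1}(w_AP_M/P_M)$ are the same union of Bia{\l}ynicki--Birula cells of $\BigBS$ for the cocharacter $\z^\vee$. Both are closed and $T$-stable, so each is a disjoint union $\bigsqcup_{p}\,(C^+_p\cap(\,\cdot\,))$ over the $T$-fixed points $p$ it contains; hence it suffices to check that their fixed-point sets agree and that, for each such point, the \emph{whole} attracting cell lies in both sets.

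\emph{Fixed points.} By Lemma~\ref{lem:fixedpoints} the $T$-fixed points of $f^{-1}(w_AP_M/P_M)$ are the $[\un e]$ with $\un x^{\un e}\in w_AW_M$, and Lemma~\ref{lem:sub} says every such $\un e$ has $e_i=1$ for $t_i\in A$ and $e_i=0$ for $t_i=s_n$. On the other hand $F$ is manifestly closed and $T$-stable, and one checks directly (using that a nontrivial $s\in S$ does not lie in $B$) that $[\un e]\in F$ precisely when $e_i=0$ for $t_i=s_n$ and $e_i=1$ for $t_i\in A$; call this set of subexpressions $F^T$. Thus $f^{-1}(w_AP_M/P_M)^T\subseteq F^T$, and $F^T$ is the full fixed-point set of $F$; equality with $f^{-1}(w_AP_M/P_M)^T$ will drop out of the cell analysis below.

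\emph{Attracting cells.} Fix $\un e\in F^T$ and work in the chart $C_{\un e}\cong\CM^\ell$ of~\eqref{eq:chart}. I claim that at each coordinate $i$ with $t_i=s_n$ or $t_i\in A$ the $T$-weight~\eqref{eq:Twts} lies in $\Phi^-$. Indeed, since the $M$-letters commute with $s_n$ and with the letters of $A$, the partial product $t_1^{e_1}\cdots t_i^{e_i}$ splits as $v_Mv_A$ with $v_M\in W_M$ permuting the first $n$ coordinates and $v_A\in W_A$ permuting the coordinates $n{+}1,\dots,n{+}a$; as $v_M$ fixes all coordinates $>n$, the weight is $v_A(-\alpha_{t_i})$, and $v_A$ is a prefix of the reduced word $\un{z}'_m\cdots\un{z}'_1$ for $w_A$ (the Remark in \S\ref{sec:comb}). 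For $t_i\in A$, appending $t_i$ keeps that word reduced, so $v_A(\alpha_{t_i})\in\Phi^+$; for $t_i=s_n$, $t_1^{e_1}\cdots t_i^{e_i}$ sends $\e_n$ to some $\e_j$ with $j\le n$ and $\e_{n+1}$ to some $\e_k$ with $k\ge n{+}1$, so again $t_1^{e_1}\cdots t_i^{e_i}(\alpha_n)\in\Phi^+$. Hence by~\eqref{eq:bsbb} all these coordinates vanish on $C^+_{\un e}$, so a point of $C^+_{\un e}$ has a representative $[g_1,\dots,g_\ell]$ with $g_i=1$ for $t_i=s_n$ and $g_i=t_i$ for $t_i\in A$; that is, $C^+_{\un e}\subseteq F$. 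For such a point the $\un z_j$-blocks of the product $g_1\cdots g_\ell$ contribute exactly $\un{z}'_j\in W_A$ (with total $w_A$), while the $\un w_j$-blocks contribute matrices supported on the first $n$ coordinates, hence lying in $GL_n\subset P_M$ and commuting with $w_A$; collecting the factors gives $g_1\cdots g_\ell P_M=w_AP_M$, so $C^+_{\un e}\subseteq f^{-1}(w_AP_M/P_M)$ as well.

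\emph{Conclusion and main difficulty.} Combining the last two paragraphs: $C^+_{\un e}\subseteq F\cap f^{-1}(w_AP_M/P_M)$ for every $\un e\in F^T$, so each of $[\un e]\in F^T$ lies in $f^{-1}(w_AP_M/P_M)^T$ too, whence both fixed-point sets equal $F^T$; since each of $F$ and $f^{-1}(w_AP_M/P_M)$ is closed and $T$-stable, both therefore equal $\bigsqcup_{\un e\in F^T}C^+_{\un e}$, and so $F=f^{-1}(w_AP_M/P_M)$. The step I expect to be the real obstacle is the multiplication computation in the chart --- sorting $g_1\cdots g_\ell$ into the $W_A$-part coming from the $\un z_j$-blocks and the $P_M$-part coming from the $\un w_j$-blocks, and checking that the two commute and that the $P_M$-part is absorbed into $P_M$. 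This is the geometric shadow of the string-diagram bookkeeping in \cite{WT}; its essential inputs are precisely Lemma~\ref{lem:sub} and the reducedness of $\un{z}'_m\cdots\un{z}'_1$, so the combinatorial section has already been arranged to supply exactly what is needed.
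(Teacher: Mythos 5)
There is a genuine gap. The key claim in your "attracting cells" paragraph — that for $\un e\in F^T$ and $t_i\in A$ the weight in \eqref{eq:Twts} lies in $\Phi^-$ — is wrong, due to a sign error, and this failure is fatal to the whole strategy of identifying $F$ with a union of attracting cells $C^+_{\un e}$.

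Concretely: for $t_i\in A$ one has $e_i=1$, so the weight is
\[
(t_1^{e_1}\cdots t_i^{e_i})(-\alpha_{t_i})
=(t_1^{e_1}\cdots t_{i-1}^{e_{i-1}})\bigl(t_i(-\alpha_{t_i})\bigr)
=(t_1^{e_1}\cdots t_{i-1}^{e_{i-1}})(\alpha_{t_i})
= v'_A(\alpha_{t_i}),
\]
where $v'_A$ is the $A$-part of the product up to position $i-1$; and since $v'_A t_i$ is a reduced prefix of $\un{z}'_m\cdots\un{z}'_1$, this weight lies in $\Phi^+$, not $\Phi^-$. (You wrote "the weight is $v_A(-\alpha_{t_i})$" while also treating $v_A$ as a prefix \emph{not} containing $t_i$ — the $t_i^{e_i}$ with $e_i=1$ flips the sign of $-\alpha_{t_i}$ and this was dropped.) Your argument for the $s_n$-positions is correct, but the $A$-positions go the opposite way. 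As a result $\lambda_i$ is \emph{free} on $C^+_{\un e}$ at $A$-positions, and $C^+_{\un e}\not\subset F$; hence also $C^+_{\un e}\not\subset f^{-1}(w_AP_M/P_M)$, and the identity $F=\bigsqcup_{\un e\in F^T}C^+_{\un e}$ on which your conclusion rests is false. A tiny example: $n=2$, $m=2$, $a_1=a_2=1$, $\un w_1=\un w_2=s_1$, so $\un x=s_1s_2s_3s_1s_2$, $A=\{s_3\}$, $w_A=s_3$. At $\un e=(0,0,1,0,0)$ the weight at position $3$ is $s_3(-\alpha_3)=\alpha_3\in\Phi^+$, so $C^+_{\un e}$ has $\lambda_3$ free; but for $\lambda_3\neq0$ the point $[1,1,s_3u_{s_3}(\lambda_3),1,1]$ maps to $s_3u_{s_3}(\lambda_3)P_M\neq s_3P_M$, so it lies in neither $F$ nor $f^{-1}(w_AP_M/P_M)$.

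What you have correctly established is essentially the first half of the paper's argument: using Lemmas~\ref{lem:fixedpoints}, \ref{lem:sub} and the negativity of the weight at $s_n$-positions, every attracting cell over $f^{-1}(w_AP_M/P_M)$ satisfies $\lambda_i=0$ at $s_n$-positions, i.e.\ $f^{-1}(w_AP_M/P_M)\subset BS(\un g)$ where $\un g$ retains the $M\cup A$-letters. The $A$-positions cannot be forced to zero by the Bia{\l}ynicki--Birula argument (and indeed should not be: they contribute the $\PM^1$-directions inside $BS(\un g)$ that record the $W_A$-flag). The remaining content — that among all of $BS(\un g)$ only the slice with $g_i=t_i$ at $A$-positions sits over $w_AP_M/P_M$ — needs a different input; the paper supplies it via the splitting \eqref{eq:splitting} and the fact that $\un z'_m\cdots\un z'_1$ is a \emph{reduced} word for $w_A$ (so $BS(\un z'_m\cdots\un z'_1)\to G/P_M$ has one-point fibre over $w_A$). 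Your "multiplication computation" would have to prove exactly this one-point-fibre statement, not assume $g_i=t_i$ at $A$-positions, which is the very thing in question.
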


\begin{proof}
  It is clear that $F \subset f^{-1}(w_AP_M/P_M)$. It remains to show
  the reverse inclusion. Consider the subexpression $\un{g} :=
  g_1 \dots g_\ell$ where $g_i = 1$ if $t_i \in M \cup A$ and $g_i =
  0$ if $t_i = s_n$. We first claim that
\begin{equation}
  \label{eq:incl}
f^{-1}(w_AP_M/P_M) \subset BS(\un{g}).
  \end{equation}
(The subvariety $BS(\un{g})$ was defined in \S\ref{sec:bs}.)
Suppose that $q \in f^{-1}(w_AP_M/P_M)$. Recall our dominant regular
cocharacter $\z^\vee$ from earlier. Then $\lim_{z \to 0} \z^\vee(z)
\cdot q \in f^{-1}(w_AP_M/P_M)$ and is a $T$-fixed point. Hence $\lim_{z \to 0} \z^\vee(z)
\cdot q = [\un{e}]$ for some subexpression $\un{e}$ of $\un{x}$, and
$q \in C_{\un{e}}^+$. Now combining Lemma \ref{lem:fixedpoints} and
Lemma \ref{lem:sub}, we see that $\un{e}$
satisfies $e_i = 0$ if $t_i = s_n$ and $e_i = 1$ if $t_i \in A$.
It
follows that $C^+_{\un{e}} \subset BS(\un{g})$ by \eqref{eq:bsbb}. (We use that if $\a \in \Phi^-$ is not in $\Phi_{M \cup A}$, then
$w(\a) \in \Phi^-$ for all $w \in W_{M \cup A}$.) Now
\eqref{eq:incl} follows.

Because the sets $M$ and $A$ are disconnected in the Dynkin diagram we have an isomorphism
\begin{equation} \label{eq:splitting}
BS(\un{g}) \simto BS(\un{w}_m\un{w}_{m-1} \dots \un{w}_1) 
\times BS(\un{z}_m'\un{z}_{m-1}'  \dots \un{z}_1')
\end{equation}
which commutes with the multiplication map $f$. (The expressions
$\un{z}_i'$ were defined in the previous section.) On the first factor
of the right hand side of \eqref{eq:splitting} the multiplication map
is the projection to the base
point $P_M/P_M
\in G/P_M$. Now
$\un{z}_m'\un{z}_{m-1}'  \dots \un{z}_1'$ is a reduced expression for
$w_A$, and hence the fibre of
\[
BS(\un{z}_m'\un{z}_{m-1}'  \dots \un{z}_1') \to G/P_M
\]
over $w_A$ consists only of one point. The result follows.
\end{proof}

The following is easily deduced from \eqref{eq:splitting}.

\begin{cor} \label{cor:bsiso}
  The fibre $F$ is smooth and we have a $T$-equivariant isomorphism
\[
\phi : BS(\un{w}_m \dots \un{w}_1)\simto F.
\]
\end{cor}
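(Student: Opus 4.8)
\emph{Proof sketch.} The plan is to identify $F$ with one factor of the product appearing in \eqref{eq:splitting}. Recall from the proof of Lemma~\ref{lem:fibre} that $F \subset BS(\un{g})$, where $\un{g}$ is the subexpression of $\un{x}$ with $g_i = 0$ precisely when $t_i = s_n$, and that \eqref{eq:splitting} is a $T$-equivariant isomorphism
\[
BS(\un{g}) \simto BS(\un{w}_m \dots \un{w}_1) \times BS(\un{z}'_m \dots \un{z}'_1)
\]
commuting with $f$, obtained by sorting the $M$-letters of $\un{x}$ past the $A$-letters (legitimate since $M$ and $A$ are disconnected in the Dynkin diagram). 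First I would note that under this isomorphism the coordinates of a point of $BS(\un{g})$ indexed by the $M$-letters of $\un{x}$ become those of the first factor, and the coordinates indexed by the $A$-letters become those of the second factor.

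Next I would match the defining conditions of $F$ against this product decomposition. The condition $g_i = 1$ for $t_i = s_n$ is exactly what places $F$ inside $BS(\un{g})$, so the only remaining constraint is $g_i = t_i$ for every $i$ with $t_i \in A$, and under \eqref{eq:splitting} this constrains only the second factor, forcing every one of its coordinates to equal the corresponding simple reflection. Such a point of $BS(\un{z}'_m \dots \un{z}'_1)$ maps under $f$ to $w_A P_M/P_M$, and in the proof of Lemma~\ref{lem:fibre} we saw that this fibre consists of a single ($T$-fixed) point; call it $\star$. Hence \eqref{eq:splitting} restricts to an isomorphism $F \simto BS(\un{w}_m \dots \un{w}_1) \times \{\star\}$.

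Finally, since $\star$ is $T$-fixed the subvariety $BS(\un{w}_m \dots \un{w}_1) \times \{\star\}$ is $T$-stable and the first projection restricts to a $T$-equivariant isomorphism from $BS(\un{w}_m \dots \un{w}_1)$; composing with the inverse of \eqref{eq:splitting} produces the desired map $\phi : BS(\un{w}_m \dots \un{w}_1) \simto F$. Smoothness of $F$ is then immediate, since a Bott--Samelson variety is smooth, being an iterated $\PM^1$-bundle over a point. The only step requiring genuine care — and really the only non-formal input — is the bookkeeping showing that after the sorting isomorphism the $A$-coordinate locus cutting out $F$ is precisely the single point $\star$ in the second factor; this rests on the coordinate relabelling encoded in \eqref{eq:splitting} together with the observation, already used in Lemma~\ref{lem:fibre}, that the fibre over $w_A$ is one point. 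I do not expect any serious obstacle beyond this, which is consistent with the corollary being flagged as an easy consequence of \eqref{eq:splitting}.
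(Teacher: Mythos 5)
Your proof is correct and follows the route the paper indicates (the paper only says ``easily deduced from \eqref{eq:splitting}''): restrict the sorting isomorphism \eqref{eq:splitting} to $F$, observe that the $A$-constraints $g_i = t_i$ cut out the single $T$-fixed point of the second factor, and conclude that $F$ is $T$-equivariantly isomorphic to the first factor $BS(\un{w}_m \dots \un{w}_1)$, hence smooth. The only cosmetic slip is that one does not need to invoke the fibre over $w_A$ in the second factor to see that $\{\star\}$ is a point; the equations $g_i = t_i$ already determine the point $[t_{i_1}, t_{i_2}, \dots]$ directly, without reference to $f$.
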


In particular, by \eqref{eq:equal}:
\begin{equation}
  \label{eq:4}
  \dim F = a.
\end{equation}

Recall the dual cells $X_{w_A} \subset G/P_M$ and $S_{w_A} \subset
G/P_M$ which are the attracting and repelling sets for the fixed point $w_A \in G/P_M$
and cocharacter $\z^\vee : \CM^* \to T$. Because $S_{w_A}$ is a normal slice to
the stratum $X_{w_A} \subset \overline{X}_x \subset G/P_M$ we
conclude:
\begin{gather}
  \label{eq:smooth}
  f^{-1}(S_{w_A}) \subset \BigBS \text{ is smooth,}\\
\dim f^{-1}(S_{w_A}) = \dim S_{w_A} = \ell(x) - \ell(w_A) = 2a. \label{eq:Sdim}
\end{gather}
In particular we are in the ``miracle situation'': by \eqref{eq:4} and
Lemma \ref{lem:fibre} the fibre of $f$
over $w_AP_M/P_M$ is irreducible, smooth, and half-dimensional inside
$f^{-1}(S_{w_A})$.

By the discussion in \S\ref{sec:what} it follows that in order to 
decide when the decomposition theorem holds at
$w_A$ we need to calculated the self-intersection of $F$ inside
$f^{-1}(S_{w_A})$.
This will be done in the 
next section, and will rely on the following lemma.

\begin{lem} \label{lem:normal}
  Let $[\un{e}]$ be a $T$-fixed point belonging to $F$. Then the
  $T$-weights on the normal bundle to $F \subset  f^{-1}(S_{w_A})$ at $[\un{e}]$  are
\[
\{ t_1^{e_1} t_2^{e_2} \dots t_j^{e_j}(-\a_n) \; | \; 1 \le j \le \ell;
t_j = s_n \}.
\]
\end{lem}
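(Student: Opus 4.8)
The strategy is to reduce the computation of $T$-weights on the normal bundle $N_{f^{-1}(S_{w_A})/F}$ at $[\un{e}]$ to weight bookkeeping inside the linear chart $C_{\un{e}}\subset\BigBS$ already set up in \S\ref{sec:bs}. First I would work inside the open $T$-stable chart $C_{\un{e}}\cong \CM^{\ell}$ around $[\un{e}]$, whose tangent $T$-weights are listed in \eqref{eq:Twts}, namely $(t_1^{e_1}\cdots t_i^{e_i})(-\alpha_{t_i})$ for $1\le i\le\ell$ (plus the weights coming from the final $P_M$-factor, which do not matter since $f$ collapses that factor). The subvariety $f^{-1}(S_{w_A})$ is a smooth $T$-stable locally closed subvariety through $[\un{e}]$, and $F\subset f^{-1}(S_{w_A})$ is a smooth $T$-stable closed subvariety; so at the fixed point $[\un{e}]$ the tangent space of $\BigBS$ splits as a $T$-module into three pieces: $T_{[\un{e}]}F$, the normal directions of $F$ inside $f^{-1}(S_{w_A})$, and the normal directions of $f^{-1}(S_{w_A})$ inside $\BigBS$. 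The claim is that the middle piece is spanned exactly by the coordinate directions $i$ with $t_i=s_n$.

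The key identifications are: (i) by Corollary \ref{cor:bsiso} and the splitting \eqref{eq:splitting}, $F\cong BS(\un{w}_m\dots\un{w}_1)$ sits inside $C_{\un{e}}$ as the coordinate subspace $\{\la_i=0 \text{ if } t_i\in A\cup\{s_n\}\}$ (using that $[\un{e}]\in F$ forces $e_i=1$ for $t_i\in A$ and $e_i=0$ for $t_i=s_n$ by Lemma \ref{lem:sub}); and (ii) $f^{-1}(S_{w_A})$ is the dual Bia\l ynicki--Birula-type cell — more precisely, $S_{w_A}$ is the repelling set of $w_A$ for $\z^\vee$, so $f^{-1}(S_{w_A})$ is cut out inside $C_{\un{e}}$ by setting to zero exactly those coordinates $\la_i$ whose $T$-weight $(t_1^{e_1}\cdots t_i^{e_i})(-\alpha_{t_i})$ lies in $\Phi^+$ (the attracting directions), i.e.\ $f^{-1}(S_{w_A})$ is spanned by the coordinate directions with weight in $\Phi^-$. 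This is the exact analogue of \eqref{eq:bsbb} for the cocharacter $-\z^\vee$, and follows because $f$ intertwines $\z^\vee$ on $\BigBS$ with $\z^\vee$ on $G/P_M$ and sends $[\un{e}]\mapsto w_A$. Granting (i) and (ii), the normal directions to $F$ inside $f^{-1}(S_{w_A})$ are precisely the coordinate directions $i$ that are (a) not among the $F$-directions, i.e.\ $t_i\in A\cup\{s_n\}$, and (b) among the $f^{-1}(S_{w_A})$-directions, i.e.\ of negative weight. For $t_i\in A$: such a coordinate has $e_i=1$, and since $\un{z}'_m\cdots\un{z}'_1$ is reduced for $w_A$ one checks its weight $(t_1^{e_1}\cdots t_i^{e_i})(-\alpha_{t_i})$ is a positive root (it is an inversion being "created", not "used up"), so it is an attracting direction and lies in $f^{-1}(S_{w_A})$'s \emph{transverse} directions, hence not counted — this is why only the $s_n$-indices survive. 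For $t_i=s_n$: one shows $(t_1^{e_1}\cdots t_{i-1}^{e_{i-1}})(-\alpha_n)\in\Phi^-$, using $e_i=0$ and the string-diagram analysis from \S\ref{sec:comb}, so these are exactly the desired weights $t_1^{e_1}\cdots t_j^{e_j}(-\alpha_n)$ with $t_j=s_n$ (note $t_j^{e_j}=t_j^0=1$, so $t_1^{e_1}\cdots t_j^{e_j}(-\alpha_n)=t_1^{e_1}\cdots t_{j-1}^{e_{j-1}}(-\alpha_n)$, matching the chart weight).

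The main obstacle I anticipate is establishing (ii) rigorously — i.e.\ that $f^{-1}(S_{w_A})$ really is the "negative-weight" coordinate subspace of the chart $C_{\un{e}}$, rather than something that only agrees with it to first order. The cleanest route is to observe that $f^{-1}(S_{w_A})$ is by definition the repelling set $\{q\in\BigBS:\lim_{z\to\infty}\z^\vee(z)\cdot q=[\un{e}]\}$ (since $f$ is $T$-equivariant, proper, and an isomorphism near the generic point, a limit computation transports between $\BigBS$ and $G/P_M$), and then that on the $T$-linear chart $C_{\un{e}}$ this repelling set is visibly the span of the negative-weight axes — exactly as \eqref{eq:bsbb} was derived for attracting sets, but with the inequality reversed. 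A secondary subtlety is the sign/positivity claim for the $A$-coordinates, which is where the reducedness of $\un{z}'_m\cdots\un{z}'_1$ (and the parenthetical remark in the proof of Lemma \ref{lem:fibre} that $w(\alpha)\in\Phi^-$ for $\alpha\in\Phi^-\setminus\Phi_{M\cup A}$, $w\in W_{M\cup A}$) does the real work; I would isolate this as the one genuine computation. Everything else is formal $T$-representation-theoretic decomposition of the tangent space at a fixed point together with bookkeeping already done in Lemmas \ref{lem:fixedpoints}, \ref{lem:sub} and Corollary \ref{cor:bsiso}.
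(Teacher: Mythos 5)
Your key step (ii) is false: $f^{-1}(S_{w_A})\cap C_{\un{e}}$ is \emph{not} the span of the negative-weight coordinate axes. The easiest way to see this is that $F=f^{-1}(w_AP_M/P_M)$ is contained in $f^{-1}(S_{w_A})$ (since $w_A\in S_{w_A}$) and, by your own correct item (i), $T_{[\un{e}]}F$ is spanned by the $M$-coordinates; but the $M$-coordinates of $C_{\un{e}}$ can have \emph{positive} $T$-weight. Indeed for a generic $\un{e}$ with some $e_i=1$, $t_i\in M$, the weight $(t_1^{e_1}\cdots t_i^{e_i})(-\alpha_{t_i})$ is frequently a positive root — already in the smallest example (take $m=2$, $w_1=w_2=s_1$, $a_1=a_2=1$, so $\un{x}=s_1s_2s_3s_1s_2$, and look at $\un{e}=(1,0,1,1,0)$: the $i=1$ coordinate has weight $s_1(-\alpha_1)=\alpha_1\in\Phi^+$). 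Under your claim this $M$-direction would be excluded from $T_{[\un{e}]}(f^{-1}S_{w_A})$, contradicting $T_{[\un{e}]}F\subset T_{[\un{e}]}(f^{-1}S_{w_A})$. Behind the error is the identification of $f^{-1}(S_{w_A})$ with the \emph{repelling cell of $[\un{e}]$ in $\BigBS$}: while $S_{w_A}$ is a repelling cell in $G/P_M$, the $f$-preimage of an attracting/repelling cell under a $T$-equivariant proper map is not the cell of a single fixed point upstairs — it is a union of cells, one for each $T$-fixed point mapping into $w_A$, and near $[\un{e}]$ that union is typically strictly larger than $C^-_{\un{e}}$. A dimension check makes this concrete: $\dim f^{-1}(S_{w_A})=2a$, whereas $\dim C^-_{\un{e}}$ equals the number of non-attracting coordinates, which generally differs from $2a$ (in the example above it is $3$, not $4$).

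What the paper actually does avoids this trap: it establishes only the one-sided inclusion $\{(\la_i)\mid\la_i=0\text{ if }t_i\in W_A\}\subset T_{[\un{e}]}(f^{-1}S_{w_A})$. The $M$-directions are covered because they span $T_{[\un{e}]}F$, and the $s_n$-directions are covered by exhibiting explicit curves $\gamma\mapsto[s_1^{e_1},\dots,u_{\a_n}(\gamma),\dots]$ whose weight lies in $\Phi^-\setminus\Phi^-_{M\cup A}$, hence whose image flows to $[\un{e}]$ as $z\to\infty$ and therefore lands in $f^{-1}(S_{w_A})$. The two sides then have the same dimension $2a$ by \eqref{eq:Sdim}, forcing equality. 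Your plan can be repaired along these lines — drop the assertion that $f^{-1}(S_{w_A})$ equals the repelling cell, replace it with the inclusion (repelling directions)\,$+$\,($F$-directions) $\subset T_{[\un{e}]}(f^{-1}S_{w_A})$, and close with the dimension count — but as written the identification in (ii) is a genuine gap that would make the asserted $T$-module decomposition ill-defined (since $T_{[\un{e}]}F$ is not even a subspace of your candidate for $T_{[\un{e}]}(f^{-1}S_{w_A})$). Note also that once you use the dimension argument, the positivity check for the $A$-directions — your ``secondary subtlety'' — becomes unnecessary.
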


\begin{proof}
Consider the
  chain of inclusions (where $T_{[\un{e}]}$ denotes the tangent space)
\[
T_{[\un{e}]}F \subset T_{[\un{e}]}(f^{-1}(S_{w_A})) \subset T_{[\un{e}]}\BigBS.
\]
Our goal is to calculate the $T$-weights on the normal bundle to $F
\subset f^{-1}(S_{w_A})$ at the $T$-fixed point $[\un{e}]$:
\[
(N_F(f^{-1}S_{w_A}))_{{[\un{e}]}} = T_{[\un{e}]}(f^{-1}S_{w_A})/T_{[\un{e}]}F.
\]
  We work in the chart $C_{\un{e}}$ around $[\un{e}]$. As
  $C_{\un{e}}$ is an affine space with linear $T$-action we have a
  $T$-equivariant identification $C_{\un{e}} =
  T_{[\un{e}]}BS(\un{x})$. Under this identification we claim:
\begin{gather} \label{eq:t1}
  T_{[\un{e}]}F  = \{ (\la_i)_{i = 1}^\ell \; | \; \la_i = 0 \text{ unless } t_i \in
  W_M\}, \\
  T_{[\un{e}]}S_{w_A}  = \{ (\la_i)_{i = 1}^\ell \; | \; \la_i = 0 \text{ if } t_i \in
  W_A\}. \label{eq:t2}
\end{gather}
(We identify $C_{\un{e}} = \CM^\ell$ as in \eqref{eq:chart}.)
The first equality follows from the proof of the previous lemma. For the second
equality notice that if $j$ is such that $t_j = s_n$ (and hence $e_j =
0$) then the curve
\[
c: \g \mapsto [s_1^{e_1}, \dots, s_{j-1}^{e_{j-1}}, 
u_{\a_n}(\g), s_{j+1}^{e_{j+1}}, \dots, s_m^{e_m} ] \in \BigBS
\]
has $T$-weight in $\Phi^- \setminus \Phi^-_{M \cup
  A})$. (Recall that $e_j = 0$ if $t_j = s_n$ and that $w(-\a_n) \in
\Phi^- \setminus \Phi^-_{M \cup A}$ for all $w \in W_{M \cup A}$.) In
particular for any $\g \in \CM$, $\lim_{z   \to
  \infty} \z^\vee(z) \cdot c(\g) = [\un{e}]$. From the definition of
$S_{w_A}$ as a repelling set we deduce that $f
\circ c$ is contained in $S_{w_A}$ and hence the image of $c$ is contained in
$f^{-1}(S_{w_A})$. Taking derivatives of all such curves we deduce an inclusion
\begin{equation} \label{eq:tangent}
\{ (\la_i)_{i = 1}^\ell \; | \; \la_i = 0 \text{ if } t_i \in
  W_A\} \subset   T_{[\un{e}]}S_{w_A}.
\end{equation}
However both sides have dimension $2a$: the left hand side by
inspection, and the right hand side by \eqref{eq:Sdim}. We deduce that
\eqref{eq:tangent} is an equality, which is \eqref{eq:t2}.

The lemma now follows easily from \eqref{eq:t1}, \eqref{eq:t2} and
\eqref{eq:Twts}.
\end{proof}

\section{Euler class lemma}

\label{sec:euler-class-lemma}

The goal of this section is to prove a lemma which computes the proper
direct image of certain ``combinatorial'' cohomology classes in the
equivariant cohomology of Bott-Samelson resolutions.

We keep the notation of the previous sections. Recall that $T \subset
G = GL_n(\CM)$ denotes the maximal torus of diagonal matrices and $\e_i$ for $1
\le i \le n$ denote the coordinate characters. Given a $T$-space $X$
we denote by $H^*_T(X)$ its equivariant cohomology. (In this section we always take
cohomology with coefficients in $\ZM$.) The Borel
isomorphism gives a canonical isomorphism
\[
H^*_T(pt) = \ZM[\e_1,
\dots, \e_n] = R
\]
with $\deg \e_i = 2$ for $1 \le i \le m$.

Let us fix an expression $\un{w} = t_1t_2 \dots t_m$ and let $BS(\un{w})$ denote the
corresponding Bott-Samelson variety. By the localization theorem the restriction map
\[
H_T^*(BS(\un{w})) \to
H_T^*(BS(\un{w})^T) = \bigoplus_{\un{e} \subset \un{w}} H_T^*([\un{e}]) \]
is injective. In particular, any cohomology class $c \in
H_T^*(BS(\un{w})$ is determined by a tuple $(c_{\un{e}})$ of elements
of $R$ indexed by all subexpressions $\un{e}$ of $\un{w}$.

We say that a class $c \in H_T^*(BS(\un{w})$ is \emph{combinatorial} if there exists
polynomials $f_1, f_2, \dots, f_m$ such that, for any subexpression
$\un{e} = e_1 \dots e_m$ of $\un{w}$, we have
\[
c_{\un{e}} = s_1^{e_1}(f_1s_2^{e_2}(f_2 \dots s_m^{e_m}(f_m) \dots )).
\]
Given a combinatorial $c$ we say that it is \emph{described} by
the polynomials $f_1, f_2, \dots, f_m$.

\begin{ex} \label{ex:ec}
We give an example of a naturally occurring combinatorial cohomology class.
Fix a representation $V$ of $B^m$ and consider the induced bundle
\[
L_V := (P_{t_1} \times \dots \times P_{t_m}) \times_{B^m} V
\]
which is naturally a vector bundle on $BS(\un{w})$ with fibre $V$. For any $1 \le i
\le m$ let $V_i$ denote the restriction of $V$ to the $i^{th}$ copy
$B \subset B^m$ and let $f_i := \det V_i \in \ZM[\e_1, \dots, \e_m]$
denote the product of the characters of $T \subset B$ occurring in
$V_i$.  Then the equivariant Euler class of $L_V$ is combinatorial,
being described by the polynomials $f_1, f_2, \dots, f_m$.
\end{ex}

Let $\un{v} := t_1t_2 \dots t_{m-1}$ be the expression obtained by
ignoring the last term of $\un{w}$. The projection map 
$P_{t_1} \times \dots \times P_{t_{m-1}} \times P_{t_m} \to P_{t_1}
\times \dots \times P_{t_{m-1}}$ induces a morphism
\[
r : BS(\un{w}) \to BS(\un{v})
\]
which is easily seen to be a $\PM^1$-fibration. Given a subexpression
$\un{e}$ of $\un{v}$ we obtain two subexpressions of $\un{w}$ by
appending either a $0$ or a $1$ to $\un{e}$. We denote these
subexpressions simply by $\un{e}0$ and $\un{e}1$. The $T$-fixed points
in the fibre $r^{-1}([\un{v}])$ are precisely the points $\un{e}0$ and $\un{e}1$.

\begin{prop} \label{prop:push}
Suppose that $c \in H_T^*(BS(\un{w})$ is a combinatorial
  class described by $f_1,f_2, \dots, f_m$. Then $r_!(c)$ is
  also combinatorial and is described by $g_1, \dots, g_{m-1}$ where
\[
g_i := \begin{cases} f_i & \text{if $i < m-1$,} \\
f_{m-1}\partial_{t_m}(f_m) & \text{if $i = m-1$.} \end{cases}
\]
\end{prop}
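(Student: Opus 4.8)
The plan is to reduce to the $T$-fixed points via the localization theorem and then to compute the Gysin pushforward $r_!$ fibrewise, where the $\PM^1$-fibre integration turns into the divided difference operator $\partial_{t_m}$.

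\emph{Reduction to fixed points.} As recalled above, the restriction maps $H_T^*(BS(\un w)) \to \bigoplus_{\un e \subset \un w} R$ and $H_T^*(BS(\un v)) \to \bigoplus_{\un e \subset \un v} R$ are injective, so it suffices to check, for every subexpression $\un e = e_1 \dots e_{m-1}$ of $\un v$, that
\[
(r_! c)_{\un e} = s_1^{e_1}\!\big(g_1 s_2^{e_2}(g_2 \dots s_{m-1}^{e_{m-1}}(g_{m-1})\dots)\big)
\]
with the $g_i$ as in the statement. Since $r$ is a smooth proper morphism (a $\PM^1$-bundle), smooth base change identifies the restriction of $r_! c$ to the fixed point $[\un e]$ with the integral over the fibre $r^{-1}([\un e])$ of $c|_{r^{-1}([\un e])}$. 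Here $r^{-1}([\un e])$ is $T$-stable (preimage of a $T$-fixed point), a copy of $\PM^1$ whose only $T$-fixed points are $[\un e0]$ and $[\un e1]$; these are the two fixed points described just before the proposition.

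\emph{Localization on the fibre.} Applying the Atiyah--Bott fixed-point formula on $r^{-1}([\un e]) \cong \PM^1$ gives
\[
(r_! c)_{\un e} = \frac{c_{\un e0}}{\chi_0} + \frac{c_{\un e1}}{\chi_1},
\]
where $\chi_0, \chi_1$ are the $T$-weights on the tangent lines to $r^{-1}([\un e])$ at $[\un e0]$ and $[\un e1]$. These tangent lines are precisely the last coordinate direction of the charts $C_{\un e0}$ and $C_{\un e1}$, so by \eqref{eq:Twts}, writing $u := t_1^{e_1}\dots t_{m-1}^{e_{m-1}}$, we get $\chi_0 = u(-\a_{t_m})$ and $\chi_1 = u(\a_{t_m})$; in particular $\chi_1 = -\chi_0$, so $(r_! c)_{\un e} = \chi_1^{-1}(c_{\un e1} - c_{\un e0})$.

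\emph{Recognizing the divided difference.} Let $\Phi(g) := s_1^{e_1}(f_1 s_2^{e_2}(f_2 \dots s_{m-1}^{e_{m-1}}(f_{m-1}g)\dots))$ for $g \in R$. This operator is additive, and because each $s_i^{e_i}$ is a ring automorphism of $R$ it intertwines multiplication by $\a_{t_m}$ with multiplication by $u(\a_{t_m}) = \chi_1$, i.e. $\Phi(\a_{t_m}h) = \chi_1\,\Phi(h)$. Since $t_m^0 = \id$ and $t_m^1 = t_m$, one has $c_{\un e0} = \Phi(f_m)$ and $c_{\un e1} = \Phi(t_m(f_m))$, whence
\[
c_{\un e1} - c_{\un e0} = \Phi\big(t_m(f_m) - f_m\big) = \Phi\big(-\a_{t_m}\,\partial_{t_m}(f_m)\big) = -\chi_1\,\Phi\big(\partial_{t_m}(f_m)\big),
\]
using the identity $t_m(f) - f = -\a_{t_m}\,\partial_{t_m}(f)$ that follows at once from the definition of $\partial_{t_m}$. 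Dividing by $\chi_1$, the tangent weights cancel and we are left with $(r_! c)_{\un e} = \Phi(\partial_{t_m}(f_m)) = s_1^{e_1}(f_1 \dots s_{m-1}^{e_{m-1}}(f_{m-1}\partial_{t_m}(f_m))\dots)$, which is exactly the combinatorial class described by $g_1, \dots, g_{m-1}$.

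The step that will need the most care is the sign and weight bookkeeping in the last paragraph: one must match the $-\a_{t_m}$ from the divided-difference identity and the relation $\chi_1 = -\chi_0$ against the normalization of the Gysin map $r_!$ (equivalently, the orientation of the $\PM^1$-fibre) so that all signs and all tangent weights cancel to give precisely the stated $g_i$. The remaining ingredients---injectivity of restriction to $T$-fixed points, smooth base change for $r_!$, the weight computation \eqref{eq:Twts}, and the fact that each $s_i^{e_i}$ acts as a ring automorphism---are already in place.
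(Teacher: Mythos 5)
Your approach is essentially the same as the paper's: inject into the fixed-point cohomology, compute the $\PM^1$-fibre pushforward via proper base change, and recognize the resulting expression as a divided difference. You invoke the Atiyah--Bott fixed-point formula directly where the paper invokes its Lemma~\ref{lem:P1}; these are the same computation.

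The sign you flag at the end is genuine and should not be waved away. Your own chain of equalities gives $c_{\un e1} - c_{\un e0} = -\chi_1\,\Phi(\partial_{t_m}(f_m))$, so dividing by $\chi_1$ yields $(r_!c)_{\un e} = -\Phi(\partial_{t_m}(f_m))$, not $\Phi(\partial_{t_m}(f_m))$ as written in your final sentence. This sign is correct and in fact exposes a typo in the paper's Lemma~\ref{lem:P1}: the verification argument given there (that $p_!$ kills $(1,1)$ and sends $(-\gamma,\gamma)$ to $2$, hence $p_!(0,\gamma)=1$) forces the formula $p_!(g) = (g_\infty - g_0)/\gamma$, not $(g_0 - g_\infty)/\gamma$ as stated. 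With the corrected Lemma~\ref{lem:P1}, Proposition~\ref{prop:push} should read $g_{m-1} = -f_{m-1}\partial_{t_m}(f_m)$, and Corollary~\ref{cor:ecl} acquires an overall factor of $(-1)^m$; this is harmless for the main theorem, which only uses $C \neq 0$. The one point to fix in your write-up is therefore not the Atiyah--Bott step (which you do correctly) but the claim ``the tangent weights cancel and we are left with $\Phi(\partial_{t_m}(f_m))$'': carry the minus sign, and then observe it is consistent up to the sign typo in Lemma~\ref{lem:P1}.
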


The following well-known lemma provides the key calculation:

\begin{lem}\label{lem:P1}
  Suppose that $X = \PM^1$ with non-trivial linear $T$-action and
  weights at 0 and $\infty$ given by $-\g$ and $\g$ respectively. For any
  class $g = (g_0, g_\infty) \in H_T^*(\PM^1)$ we have
\[
p_!(g) = \frac{g_0 - g_\infty}{\g}
\]
where $p : X \to \pt$ is the projection.
\end{lem}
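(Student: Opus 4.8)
The plan is to deduce the identity from the localization theorem in $T$-equivariant cohomology, after first recording the structure of $H_T^*(\PM^1)$. Since $\PM^1$ carries exactly two $T$-fixed points, which I denote $0$ and $\infty$, the restriction map
\[
H_T^*(\PM^1)\ \longrightarrow\ H_T^*(\{0\})\oplus H_T^*(\{\infty\})\ =\ R\oplus R
\]
is injective, with image exactly $\{(a,b)\in R\oplus R : a\equiv b\pmod{\g}\}$; the single congruence is imposed by the unique $T$-invariant $2$-sphere joining the two fixed points, on which $T$ acts with weight $\pm\g$. In particular $H_T^*(\PM^1)$ is a free $R$-module of rank $2$, and for $g=(g_0,g_\infty)$ in this image the quotient $(g_0-g_\infty)/\g$ already lies in $R$, so the asserted formula at least makes sense as an identity in $R$.

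Given this, I would finish in either of two equivalent ways. The quickest is the Atiyah--Bott--Berline--Vergne localization formula: writing $i_0,i_\infty$ for the inclusions of the fixed points,
\[
p_!(g)\ =\ \frac{i_0^*g}{e_T(T_0\PM^1)}+\frac{i_\infty^*g}{e_T(T_\infty\PM^1)},
\]
and by hypothesis the two tangent Euler classes are $\g$ and $-\g$ in some order (this is exactly the content of the stated weights, visible from the linear $T$-action in the charts \eqref{eq:chart}--\eqref{eq:Twts}), so the two terms combine to $\pm(g_0-g_\infty)/\g$. Alternatively one avoids localization altogether: $p_!$ is $R$-linear of cohomological degree $-2$, hence determined by its values on the $R$-basis $\{1,[0]\}$ of $H_T^*(\PM^1)$, where $[0]$ is the equivariant fundamental class of the fixed point $0$; one has $p_!(1)=0$ for degree reasons (as $H_T^{-2}(\pt)=0$) and $p_!([0])=1$ since $[0]=(i_0)_!1$ and $p\circ i_0=\id_{\pt}$, while $i_0^*[0]=e_T(T_0\PM^1)=\pm\g$ and $i_\infty^*[0]=0$; expanding $g$ in this basis and applying $p_!$ reproduces the same expression.

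There is no genuine obstacle in this lemma; the only point requiring care is the bookkeeping of signs and orientations — which fixed point is labelled $0$, and whether the stated ``weight at $0$'' is the weight on the tangent line or on the local coordinate — all of which is fixed once and for all by the explicit chart description \eqref{eq:chart}--\eqref{eq:Twts} together with the orientation convention implicit in $p_!$. Tracking these through pins the overall sign down to $(g_0-g_\infty)/\g$, as asserted.
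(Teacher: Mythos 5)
Your proposal is correct and, in its main line, genuinely different from the paper's argument: you lead with the Atiyah--Bott--Berline--Vergne localization formula, whereas the paper never invokes localization for the pushforward and instead determines $p_!$ directly on an $R$-basis of $H_T^*(\PM^1)$. Concretely, the paper observes that $p_!$ is $R$-linear of degree $-2$, notes that it kills $(1,1)$ for degree reasons, and pins down the remaining generator by the fact that the Euler class of $T\PM^1$ (a class with localizations $(\mp\g,\pm\g)$) pushes forward to $\chi(\PM^1)=2$; your ``alternative'' paragraph is precisely this argument, with the only cosmetic difference that you normalize the second basis element to the point class $[0]=(i_0)_!1$ (so that $p_!([0])=1$) rather than to the tangent Euler class. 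Both routes are standard and equally short; ABBV buys generality (it works verbatim for any compact $T$-manifold with isolated fixed points, not just $\PM^1$), while the paper's elementary argument buys a proof that is visibly self-contained, using nothing beyond the $R$-module structure of $H_T^*(\PM^1)$ and Gauss--Bonnet. The one place where your writeup is less complete than the paper's is the sign: you explicitly carry an unresolved $\pm$ through both of your arguments and then assert that the chart conventions \eqref{eq:chart}--\eqref{eq:Twts} fix it, without actually performing that last bookkeeping step. This is a very small gap (the sign is irrelevant to the intended application, since Theorem \ref{thm:main} only uses $C\ne 0$), but since you yourself flag it as the one delicate point, you should close it rather than wave at it; note also that the paper's own proof is arguably off by a sign at exactly this spot, since $p_!(-\g,\g)=2$ together with $p_!(1,1)=0$ forces $p_!(g)=(g_\infty-g_0)/\g$, the negative of the stated formula, so a careful reader needs to settle the conventions for themselves in any case.
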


\begin{proof}
 By the localization theorem $H_T^*(\PM^1)$ identifies with pairs
 $(g_0, g_\infty) \in R \oplus R$ such that $g_0 - g_\infty \in
 (\g)$. It is easy to see that it is free over $R$ with generators in
 degree $0, 2$. Hence $p_!$ is determined by what what it does to the $R$-basis
 $(1,1)$ and $(0, \g)$. However it must annihilate $(1,1)$ for degree
 reasons and must send $(-\g, \g)$ to 2 (the Euler characteristic of
 $\PM^1$). Hence $p_!$ must be given by the above formula.
\end{proof}

\begin{proof}[Proof of Proposition \ref{prop:push}.]
  We claim that the localization of the push-forward of $c$
  at the point $[\un{e}]$ (for $\un{e}$ a subexpression of $\un{v}$) is given by
  \begin{equation} \label{eq:step1}
  (r_!c)_{\un{e}} = \frac{ c_{\un{e} 0} - c_{\un{e} 1} }{t_1^{e_1}
    \dots t_{m-1}^{e_{m-1}} (\a_{t_m})}.
  \end{equation}
As remarked above, $f^{-1}([\un{v}])$ is isomorphic to 
$\PM^1$. Moreover, by \eqref{eq:Twts} the
$T$-weights at the $T$-fixed points $\un{e}0$ and $\un{e}1$ are $t_1^{e_1}
\dots t_{m-1}^{e_{m-1}}(-\alpha_{t_m})$ and $t_1
\dots t_{m-1}^{e_{m-1}}(\alpha_{t_m})$ respectively. The 
equality in \eqref{eq:step1} now
follows by Lemma \ref{lem:P1} and proper base change.

By our assumption that $c$ is combinatorial and described by
$f_1, \dots, f_m$ we can rewrite the right hand side of
\eqref{eq:step1} as
\[
t_1^{e_1}(f_1
  \dots t_{m-1}^{e_{m-1}} (f_{m-1}\left ( \frac{f_m -
      t_mf_m}{\alpha_{t_m}} \right )) \dots ) = 
t_1^{e_1}(f_1
  \dots t_{m-1}^{e_{m-1}}( f_{m-1}\partial_{t_m}(f_m)) \dots )
\]
which is what we wanted to show.
\end{proof}

By iterating the above proposition to the maps $BS(\un{w}) \to BS(\un{v}) \to \dots \to \pt$
we deduce:

\begin{cor} \label{cor:ecl}
  Let $c \in H_T^*(BS(\un{w})$ be a combinatorial class described by
  \[f_1, f_2, \dots, f_m.\]
Then
\[
p_!(c) = \partial_{t_1}(f_1\partial_{t_2}(f_2
\dots \partial_{t_m}(f_m) \dots ))
\]
where $p : BS(\un{w}) \to \pt$ denotes the projection.
\end{cor}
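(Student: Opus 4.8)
The plan is to deduce the corollary from Proposition \ref{prop:push} by factoring the structure map $p : BS(\un{w}) \to \pt$ into a tower of $\PM^1$-fibrations and iterating. For $0 \le k \le m$ write $\un{w}^{(k)} := t_1 t_2 \dots t_k$ (so $\un{w}^{(m)} = \un{w}$ and $\un{w}^{(0)}$ is the empty expression, with $BS(\un{w}^{(0)}) = \pt$), and let $r_k : BS(\un{w}^{(k)}) \to BS(\un{w}^{(k-1)})$ be the $\PM^1$-fibration obtained by forgetting the last factor, as in the discussion preceding Proposition \ref{prop:push}. Then $p = r_1 \circ r_2 \circ \dots \circ r_m$, and since each $r_k$ is proper, functoriality of $!$-pushforward gives $p_! = (r_1)_! \circ (r_2)_! \circ \dots \circ (r_m)_!$.

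The first step is to record the following bookkeeping claim, proved by downward induction on $k$: after applying $(r_m)_!, (r_{m-1})_!, \dots, (r_{k+1})_!$ to $c$, one obtains a combinatorial class $c^{(k)} \in H^*_T(BS(\un{w}^{(k)}))$ described by the polynomials $f_1, f_2, \dots, f_{k-1}, h_k$, where
\[
h_k := f_k \, \partial_{t_{k+1}}\!\big( f_{k+1}\, \partial_{t_{k+2}}(\, \cdots \, \partial_{t_m}(f_m)\, \cdots\, ) \big).
\]
The base case $k = m$ is the hypothesis, with $h_m = f_m$. For the inductive step, Proposition \ref{prop:push} applied to $r_k$ (whose "last term" is $t_k$ and whose second-to-last described polynomial is $f_{k-1}$) says that $(r_k)_!(c^{(k)})$ is combinatorial, described by $g_1, \dots, g_{k-1}$ with $g_i = f_i$ for $i < k-1$ and $g_{k-1} = f_{k-1}\,\partial_{t_k}(h_k)$; since $f_{k-1}\,\partial_{t_k}(h_k) = h_{k-1}$, this is exactly $c^{(k-1)}$, as desired.

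It remains to run the induction down to the bottom. Taking $k = 1$ shows that after applying $(r_m)_! \circ \dots \circ (r_2)_!$ we have a combinatorial class $c^{(1)}$ on $BS(t_1) \cong \PM^1$ described by the single polynomial $h_1 = f_1 \partial_{t_2}(f_2 \cdots \partial_{t_m}(f_m) \cdots)$; its localizations at the two $T$-fixed points $[0]$ and $[1]$ are $h_1$ and $t_1(h_1)$, with $T$-weights $-\alpha_{t_1}$ and $\alpha_{t_1}$ respectively by \eqref{eq:Twts}. Applying Lemma \ref{lem:P1} to $(r_1)_! : \PM^1 \to \pt$ then yields
\[
p_!(c) = (r_1)_!\big(c^{(1)}\big) = \frac{h_1 - t_1(h_1)}{\alpha_{t_1}} = \partial_{t_1}(h_1) = \partial_{t_1}\!\big(f_1 \partial_{t_2}(f_2 \cdots \partial_{t_m}(f_m) \cdots)\big),
\]
which is the claimed formula. (One may instead simply invoke the $m=1$ case of Proposition \ref{prop:push}, under the convention that a combinatorial class on $\pt$ described by a single polynomial is that polynomial itself.) Since every ingredient is already in hand, there is no genuine obstacle; the only point requiring attention is the indexing in the inductive claim, namely that applying $(r_k)_!$ modifies only the last surviving polynomial $f_{k-1}$, wrapping it around $\partial_{t_k}$ of the previously accumulated expression, so that the nested string of divided-difference operators is assembled in the correct order.
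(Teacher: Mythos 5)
Your argument is correct and is exactly what the paper does: the paper's proof of Corollary~\ref{cor:ecl} is simply the one-line instruction to iterate Proposition~\ref{prop:push} along the tower $BS(\un{w}) \to BS(\un{v}) \to \dots \to \pt$, and your downward induction with $h_k = f_k\,\partial_{t_{k+1}}(f_{k+1}\cdots\partial_{t_m}(f_m)\cdots)$ is the careful bookkeeping that makes that iteration precise. The final step via Lemma~\ref{lem:P1} on $BS(t_1)\cong\PM^1$ is a reasonable way to handle the base of the tower, where Proposition~\ref{prop:push} as literally stated no longer produces a describing polynomial.
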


\begin{remark}
  Corollary \ref{cor:ecl} seems to
  be very useful for calculating the proper push-forward of Euler
  classes of vector bundles on Bott-Samelson varieties (see Example
  \ref{ex:ec}). This explains the title of this section. In the next
  section we will see another example of its utility.
\end{remark}

\section{Proof of the main theorem}

Finally, we return to the setting of \S\ref{sec:comb} and
\S\ref{sec:geometry}. Recall our reduced expression $\un{x}$ for $s
\in S_N$, our Schubert variety $\overline{X}_x \subset G/P_M$, our
resolution of singularities
\[
f : \BigBS \to \overline{X}_x
\]
and the normal slice $S_{w_A} \subset G/P_M$ to the Schubert cell
$X_{w_A} \subset G/P_M$. We saw in \S\ref{sec:geometry} that we are in
the miracle situation. Namely, that the fibre $F := f^{-1}(w_A)$ is
smooth and irreducible, and is half-dimensional inside $f^{-1}(S_{w_A})$. 

Moreover we saw in Corollary \ref{cor:bsiso} that we have a
$T$-equivariant isomorphism
\[
\phi : BS(\un{w}_m \dots \un{w}_1)\simto F.
\]
Let us define polynomials $f_i \in R$ for $i = 1, \dots, \sum
\ell(w_i) = a $ by
\begin{align*}
  f_1 = f_2 = \dots = f_{\ell(w_m)-1} = 1, &\quad f_{\ell(w_m)} =
  (\e_n-\e_{n+1}) \dots (\e_n - \e_{n+a_m}), \\
   f_{\ell(w_m) + 1} = \dots = f_{\ell(w_m) + \ell(w_{m-1})-1} = 1, &
   \quad f_{\ell(w_m)+ \ell(w_{m-1})} =
   (\e_n-\e_{n+1}) \dots (\e_n - \e_{n+a_{m-1}}), \\
\vdots & \quad \vdots \\
   f_{\ell(w_m) + \dots \ell(w_2) + 1} = \dots = f_{a-1} = 1, & \quad f_a =
   (\e_n-\e_{n+1}) \dots (\e_n - \e_{n+a_1}). \\
\end{align*}

\begin{lem}  \label{lem:polys}
Under the isomorphism $\phi$  above the equivariant Euler class of the
normal bundle of $F \subset f^{-1}(S_{w_A})$ is combinatorial, and is
described by the polynomials $f_1, \dots, f_a$.
\end{lem}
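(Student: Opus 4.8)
The plan is to identify the normal bundle $N_F(f^{-1}(S_{w_A}))$ on $F$ with an induced bundle of the type appearing in Example \ref{ex:ec}, so that its equivariant Euler class is automatically combinatorial, and then match the describing polynomials with the $f_i$ above by checking agreement at $T$-fixed points. Recall the isomorphism $\phi : BS(\un{w}_m \dots \un{w}_1) \simto F$ of Corollary \ref{cor:bsiso}, which came from the splitting \eqref{eq:splitting}. Under $\phi$, a $T$-fixed point of $F$ corresponds to a subexpression $\un{d}$ of $\un{w}_m \dots \un{w}_1$ (equivalently, a $T$-fixed point $[\un{e}]$ of $\BigBS$ lying in $F$, with the $s_n$-entries set to $0$ and the $A$-entries set to $1$). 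The first step is to translate the weight list of Lemma \ref{lem:normal} into a statement on $BS(\un{w}_m\dots\un{w}_1)$: the weights of $N_F(f^{-1}(S_{w_A}))$ at $[\un{d}]$ are indexed by the positions $j$ in $\un{x}$ with $t_j = s_n$, and each such position sits, in the layered description of $\un{x} = \un{w}_m\un{z}_m\dots\un{w}_1\un{z}_1$, inside one of the blocks $\un{z}_k$, right after the prefix $\un{w}_m\dots\un{w}_1$ has been traversed up through $\un{w}_k$. So the weight is $(t_1^{e_1}\cdots t_j^{e_j})(-\a_n)$ where the relevant product only involves the simple reflections in $M$ that have been used so far (those in $A$ and the $s_n$'s before position $j$ act trivially on $-\a_n = -(\e_n - \e_{n+1})$ modulo the $A$-part, which shifts $\e_{n+1}$ but the computation is bookkept by the $\un{z}_k$'s).

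Next I would set up the combinatorial class side. Define, on $BS(\un{w}_m\dots\un{w}_1)$, the combinatorial class $c$ described by $f_1, \dots, f_a$ as in the statement; by the discussion preceding Example \ref{ex:ec} its localization at $[\un{d}]$, for $\un{d} = d_1\dots d_a$, is
\[
c_{\un{d}} = u_1^{d_1}\bigl(f_1\, u_2^{d_2}(f_2 \cdots u_a^{d_a}(f_a)\cdots)\bigr)
\]
where $u_1\dots u_a$ is the expression $\un{w}_m\dots\un{w}_1$. Since $f_i = 1$ except when $i$ is the last index of some block $\ell(w_m)+\dots+\ell(w_{k+1}) + \ell(w_k)$, where it equals $(\e_n - \e_{n+1})\cdots(\e_n - \e_{n+a_k})$, the product telescopes: $c_{\un{d}}$ is a product over $k$ of $\sigma_k\bigl((\e_n-\e_{n+1})\cdots(\e_n - \e_{n+a_k})\bigr)$, where $\sigma_k$ is the product of the $s$-factors $u_1^{d_1}\cdots u_{j_k}^{d_k}$ accumulated up to (and including) the block $\un{w}_k$. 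Comparing with the weight list from the previous paragraph — there are exactly $a_k$ copies of $s_n$ in the block $\un{z}_k$, contributing weights $\sigma_k(-\a_n)$ after the appropriate shift by the $s_{n+1},\dots$ already passed, which is exactly $\sigma_k(-(\e_n - \e_{n+1})), \dots, \sigma_k(-(\e_n - \e_{n+a_k}))$ up to sign — one sees that the product of these weights equals $\pm c_{\un{d}}$. The Euler class of a bundle with these $T$-weights at the fixed point $[\un{d}]$ is precisely this product (with the correct sign absorbed into the orientation conventions, which for even-rank complex bundles is $+$), so $e(N_F(f^{-1}(S_{w_A})))_{[\un{d}]} = c_{\un{d}}$ for every $\un{d}$.

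Since the localization map $H^*_T(F) \to \bigoplus_{\un{d}} H^*_T([\un{d}])$ is injective (localization theorem), the agreement at all fixed points forces $e(N_F(f^{-1}(S_{w_A}))) = c$ as classes in $H^*_T(F) \cong H^*_T(BS(\un{w}_m\dots\un{w}_1))$, i.e. the Euler class is combinatorial and is described by $f_1, \dots, f_a$, as claimed. The main obstacle is the bookkeeping in the second paragraph: one must verify carefully that the accumulated product of simple reflections $\sigma_k$ on the $BS(\un{w}_m\dots\un{w}_1)$ side really does match the product $t_1^{e_1}\cdots t_j^{e_j}$ appearing in Lemma \ref{lem:normal} — that is, that the reflections coming from the $A$-blocks $\un{z}'_i$ (which are present in $\un{x}$ but absent in $\un{w}_m\dots\un{w}_1$) conspire, via the root-shift $s_{n+r}(\e_n - \e_{n+r}) = \e_n - \e_{n+r+1}$, to turn a single weight $\sigma_k(-\a_n)$ on the small Bott-Samelson into the $a_k$-fold family $\sigma_k(-(\e_n - \e_{n+1})), \dots, \sigma_k(-(\e_n - \e_{n+a_k}))$ in $\BigBS$. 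This is where the precise form of the blocks $\un{z}_i$ chosen in \S\ref{sec:comb} is used, and it is essentially a careful inspection of the string diagram in Example \ref{ex:x}.
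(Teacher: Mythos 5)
Your core argument is the paper's: compute the localization of the Euler class at each $T$-fixed point $[\un{e}]$ of $F$ via Lemma~\ref{lem:normal}, rearrange the resulting product of weights into the telescoped form $\prod_k \sigma_k(n_k)$ using the block structure $\un{x} = \un{w}_m\un{z}_m\cdots\un{w}_1\un{z}_1$ together with the constraint from Lemma~\ref{lem:sub} that $s_n$-slots carry $0$ and $A$-slots carry $1$, observe that this is exactly the fixed-point localization of the combinatorial class described by $f_1,\dots,f_a$, and conclude by injectivity of the localization map. One caveat worth flagging: your opening sentence proposes to first realize the normal bundle as an induced bundle of the type in Example~\ref{ex:ec} so that combinatoriality is ``automatic'' --- but you never actually do this, the paper's remark immediately following this lemma explicitly records that the author was unable to decide whether the normal bundle is induced, and the proof (yours and the paper's) needs only the fixed-point comparison; so that opening plan should simply be dropped, as it otherwise leaves a dangling claim that the argument does not support.
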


\begin{proof} Recall that the localization in $T$-equivariant cohomology
  of an Euler class of an equivariant vector bundle is given by
  the product of the $T$-weights at each fixed point. Hence, by
Lemma \ref{lem:normal} the localization of the Euler class of the
normal bundle at a $T$-fixed point $\un{e}$ in $F$ is given by the product
\begin{equation}
  \label{eq:n1}
  E_{\un{e}} := \prod_{1 \le j \le \ell \atop t_j = s_n} t_1^{e_1} \dots
  t_j^{e_j}(-\alpha_n).
\end{equation}
To complete the proof, we will argue that we can rewrite the above
expression to yield a combinatorial class described by the above
polynomials.

Recall that $\un{x}$ has the form
\[
\un{x} = \un{w}_{m} \un{z}_m \dots \un{w}_2 \un{z}_2 \un{w}_1 \un{z}_1
\]
where each $\un{w}_i$ is an expression in $M$, and each $\un{z}_i$ an
expression in $A \cup \{ s_n \}$. Let us write
\[
\un{e} =
\un{e}_m'\un{f}_m'\dots\un{e}_{2}'\un{f}_{2}' 
\un{e}_1'\un{f}_1', \]
where each $\un{e}_i'$ (resp. $\un{f}_i'$) is the corresponding
subexpression of $\un{w}_i$ (resp. $\un{z}_i$). By Lemma \ref{lem:sub}
the fixed points $[\un{e}]$ in $\BigBS$ correspond to those
subexpressions $\un{e}$ of $\un{x}$ with $e_i = 0$ if $t_i = s_n$ and
$e_1 = 1$ if $t_i \in A$. An alternative way of saying this is that
we have no choice for the subexpressions $\un{f}_i'$: if we bracket $\un{z}_i$ as
\[
  (s_ns_{n+1} \dots s_{n+a_1 + \dots + a_i- 1}) \dots (s_ns_{n+1} \dots
  s_{n+a_1+ \dots + a_{i-1} + 1}) (s_ns_{n+1} \dots s_{n+a_1 + \dots +a_{i-1}}),\\
\]
then $\un{f}_i'$ has the form
\[
(01 \dots 1) \dots (01 \dots 1)(01 \dots 1).
\]
Hence we can rewrite $E_{\un{e}}$ as
\[
\un{w}_m^{\un{e}_m'} (n_m) \cdot \un{w}_m^{\un{e}_m'}
\un{w}_{m-1}^{\un{e}_{m-1}'}  (n_{m-1}) \cdot \quad \dots \quad\cdot 
\un{w}_m^{\un{e}_m'}
\un{w}_{m-1}^{\un{e}_{m-1}'}  \dots 
\un{w}_{1}^{\un{e}_{1}'}   (n_{1}) 
\]
where
\[
n_i = (\e_n - \e_{n+1})(\e_n - \e_{n+2}) \dots (\e_n - \e_{n+a_i}).
\]
For example:
\begin{align*}
  n_1 & = (-\alpha_n)(s_{n}s_{n+1} \dots s_{n+a_1-1}(-\alpha_n)) \dots
  ((s_{n+1} \dots s_{n+a_1-1}) \dots (s_ns_{n+1})(-\alpha_n)) \\
& = (\e_n - \e_{n+1})(\e_n - \e_{n+2}) \dots (\e_n - \e_{n+a_1})
\end{align*}
The lemma now follows.
\end{proof}

\begin{remark}
  One might hope that the reason that the Euler class of
  the normal bundle to $F \subset f^{-1}(S_{w_A})$ is combinatorial is
  because it is an induced bundle as in Example \ref{ex:ec}. I was
  unable to decide whether this is the case.
\end{remark}

\begin{proof}[Proof of Theorem \ref{thm:main}]
We can now complete the proof of theorem \ref{thm:main}. If we denote
by $c \in H_T^*(F)$ the Euler class of the normal bundle to $F \subset
f^{-1}(w_AP_M/P_M)$ and $p : F \to \pt$ denotes the projection then, by Lemma \ref{lem:polys} and Corollary
\ref{cor:ecl} we have
\begin{align*}
  p_!(c) &= \partial_{w_m}(n_m \partial_{w_{m-1}}(n_{m-1} \dots
    (\partial_{w_1}n_1) \dots )).
\end{align*}
By repeated application of Lemma \ref{lem:inv} below
\begin{equation*}
\partial_{w_m}(n_m \partial_{w_{m-1}}(n_{m-1} \dots
    (\partial_{w_1}n_1) \dots )) = \partial_{w_m}(\e_n^{a_m} \partial_{w_{m-1}}(\e_n^{a_{m-1}} \dots
    (\partial_{w_1}\e_n^{a_1}) \dots )) = C
\end{equation*}
where $C$ is as in the introduction. This completes the proof.  
\end{proof}

\begin{lem} \label{lem:inv}
  Consider an expression of the form
\[
D = \partial_{u_1}(g_1\partial_{u_2}(g_2 \dots \partial_{u_m}(g_m +
h_m^+g_m') \dots )) \in R
\]
with $u_i \in W_M$, $g_i \in R$, $g_m' \in R$ and $h^+_m \in
R^{W_M}$. If $D \in \ZM$ and $h^+_m$ is of degree $> 0$ then
\[
D = \partial_{u_1}(g_1\partial_{u_2}(g_2 \dots \partial_{u_m}(g_m) \dots )).
\]
\end{lem}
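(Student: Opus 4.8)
The plan is to expand $D$ by linearity, peel off the ``correction'' term $h_m^+ g_m'$ using that divided difference operators indexed by $W_M$ are linear over $R^{W_M}$, and then kill the extra summand by a degree count.

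First I would isolate the one algebraic fact needed: if $s_i\in M$ and $h\in R^{s_i}$, then the twisted Leibniz rule $\partial_i(fh)=\partial_i(f)h+s_i(f)\partial_i(h)=\partial_i(f)h$ holds; iterating along a reduced expression for $u\in W_M$ (the intermediate polynomials remain multiplied by the same $W_M$-invariant $h$) gives $\partial_u(fh)=\partial_u(f)h$ for every $u\in W_M$ and every $h\in R^{W_M}$. Since $h_m^+\in R^{W_M}$ and each $u_i\in W_M$, I can apply this first inside the innermost bracket, writing $\partial_{u_m}(g_m+h_m^+g_m')=\partial_{u_m}(g_m)+h_m^+\partial_{u_m}(g_m')$, and then pull the factor $h_m^+$ successively through each outer layer $\partial_{u_j}(g_j\,\cdot\,)$. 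The outcome is
\[
D = D_0 + h_m^+ D', \qquad
D_0 := \partial_{u_1}\bigl(g_1\partial_{u_2}(\cdots \partial_{u_m}(g_m)\cdots)\bigr), \quad
D' := \partial_{u_1}\bigl(g_1\partial_{u_2}(\cdots \partial_{u_m}(g_m')\cdots)\bigr).
\]
Thus it remains to show $h_m^+ D' = 0$; since $R$ is an integral domain and $h_m^+\neq 0$ (it has positive degree), this is equivalent to $D' = 0$.

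For this I would argue by degrees, exactly as in the justification that $C\in\ZM$ ``for degree reasons'' in the introduction. All the $g_i$, $g_m'$ and $h_m^+$ are homogeneous, the correction term $h_m^+g_m'$ has the same degree as $g_m$ (so the innermost operand is homogeneous), and $\partial_{u_i}$ has degree $-2\ell(u_i)$; hence $D$ is homogeneous of degree $\delta := \sum_i \deg g_i - 2\sum_i \ell(u_i)$. As $D\in\ZM$ is a nonzero constant (cf. the standing assumption $C\neq 0$), $\delta = 0$. But then $D'$ is homogeneous of degree
\[
\delta - \deg g_m + \deg g_m' \;=\; \delta - \deg h_m^+ \;=\; -\deg h_m^+ \;<\; 0,
\]
and a homogeneous element of $R$ of negative degree vanishes. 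Therefore $D' = 0$ and $D = D_0$, which is the assertion.

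I do not expect a genuine obstacle here. The only point that needs a little care is the bookkeeping producing $D = D_0 + h_m^+ D'$: one must check that $h_m^+$ really escapes every one of the outer layers, which is exactly why the hypotheses demand $h_m^+\in R^{W_M}$ and $u_i\in W_M$. Everything else is the twisted Leibniz rule and a degree count.
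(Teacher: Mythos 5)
Your argument is correct and follows essentially the same route as the paper's proof: both pull the $W_M$-invariant factor $h_m^+$ through all of the $\partial_{u_i}$ (using $\partial_u(fh)=\partial_u(f)h$ for $u\in W_M$, $h\in R^{W_M}$) to obtain $D = D_0 + h_m^+D'$, and then kill $D'$ by observing it is homogeneous of negative degree. One small point where you are in fact a bit more careful than the paper: the degree argument silently requires $D\neq 0$ (so that $D\in\ZM$ forces degree $0$; otherwise one can cook up examples with $D=0$ but $D_0\neq 0$), and you explicitly invoke the standing assumption $C\neq 0$ to justify this, whereas the paper leaves it implicit.
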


\begin{proof}
  Because $\partial_{u_i}(h_m^+g) = h_m^+\partial_{u_i}(g)$ for all $g
  \in R$ we have
\begin{gather*}
D = \partial_{u_1}(g_1\partial_{u_2}(g_2 \dots \partial_{u_m}(g_m +
h_m^+g_m') \dots )) = \\
= \partial_{u_1}(g_1\partial_{u_2}(g_2
\dots \partial_{u_m}(g_m) \dots )) + h_m^+\partial_{u_1}(g_1\partial_{u_2}(g_2 \dots \partial_{u_m}(g_m') \dots )).
\end{gather*}
Because $D \in \ZM$, the term  $\partial_{u_1}(g_1\partial_{u_2}(g_2
\dots \partial_{u_m}(g_m') \dots ))$ is of negative degree, and hence
is zero. The lemma follows.
\end{proof}

 \def\cprime{$'$} \def\cprime{$'$} \def\cprime{$'$} \def\cprime{$'$}

\end{document}